\documentclass{amsart}

\usepackage[T1]{fontenc}
\usepackage[utf8]{inputenc}
\usepackage{amssymb,mathtools}
\usepackage{microtype}
\usepackage[hidelinks]{hyperref}

\theoremstyle{plain}
\newtheorem{theorem}{Theorem}[section]
\newtheorem{lemma}[theorem]{Lemma}
\newtheorem{proposition}[theorem]{Proposition}
\newtheorem{corollary}[theorem]{Corollary}
\theoremstyle{remark}
\newtheorem{remark}[theorem]{Remark}
\numberwithin{equation}{section}

\newcommand{\numberthis}[1]{%
\refstepcounter{equation}%
\label{#1}%
\eqno(\theequation)}

\title[Equal-Phase Monotonicity for Jacobi--Radau Functions]
{Equal-Phase Monotonicity for Weighted Jacobi--Radau Functions and
Jacobi Lebesgue Constants}

\author{K. Castillo}

\address{CMUC, Department of Mathematics, University of Coimbra,
3000-143 Coimbra, Portugal}

\email{math@keniercastillo.com}

\author{P.-C. Hang}

\address{School of Mathematics and Statistics, Donghua University,
Shanghai 201620, People's Republic of China}

\email{mathroc618@outlook.com}

\date{\today}

\dedicatory{Dedicated to Professor Roderick S. C. Wong, whose contributions
have inspired us all.}

\subjclass[2020]{Primary 33C45; Secondary 34C10, 42C05.}

\keywords{Jacobi polynomials, Legendre polynomials, relative extrema,
Lebesgue constants, Pr\"ufer transformation, monotonicity}

\begin{document}

\begin{abstract}
For the Jacobi family with parameters $(0,\beta)$, $\beta\geq-1/3$,
we prove a continuous comparison theorem for the associated weighted
Jacobi--Radau functions.  When two consecutive functions are
parametrised by the same Pr\"ufer phase, the function of higher degree
attains that phase closer to $\theta=0$ and has strictly
larger amplitude.  In particular, the moduli of all corresponding
relative extrema increase strictly with the degree.  The lower bound
$-1/3$ is sharp for this continuous statement: when
$-1<\beta<-1/3$, the amplitude inequality is reversed at sufficiently
small positive phases.  This local reversal does not determine the
optimal range for the discrete extremal inequalities.  For
$-1/3\leq\beta\leq0$, the positive Jacobi product formula identifies
the endpoint values of the Lebesgue functions with the global
Lebesgue constants, so $(\Lambda_n^{(0,\beta)})_{n\geq0}$ is
strictly increasing.
At $\beta=0$ the weighted
Radau functions reduce to $P_m^{(0,-1)}$.  We thereby recover the
theorem of Wong and Zhang and obtain, through an exact total-variation
formula, a short proof of the Qu--Wong theorem on Legendre Lebesgue
constants.  The representation and the termwise comparison together
realise the alternative-expression approach proposed by Qu and Wong,
without asymptotic expansions, error bounds, or finite
numerical verification.  The proof is based on the equal-phase
Pr\"ufer architecture developed in the first author's earlier
preprint \cite{Castillo2026}; it applies that architecture to the
problem posed by Qu and Wong.
\end{abstract}

\maketitle

\section{Introduction}

Let $P_n^{(\alpha,\beta)}$ denote the Jacobi polynomial of degree $n$
in the standard normalisation
$$
P_n^{(\alpha,\beta)}(1)
=
\frac{(\alpha+1)_n}{n!}.
$$
The Legendre polynomials correspond to $\alpha=\beta=0$.  If
$$
w_{\alpha,\beta}(t)
:=
(1-t)^\alpha(1+t)^\beta,
\quad \alpha,\beta>-1,
\numberthis{eq:jacobi-weight}
$$
and $h_j^{(\alpha,\beta)}$ denotes the squared norm of
$P_j^{(\alpha,\beta)}$ in
$L^2([-1,1],w_{\alpha,\beta}(t)dt)$, then the reproducing
kernel of the $n$th Jacobi partial-sum operator is
$$
K_n^{(\alpha,\beta)}(x,t)
:=
\sum_{j=0}^n
\frac{
P_j^{(\alpha,\beta)}(x)P_j^{(\alpha,\beta)}(t)
}{h_j^{(\alpha,\beta)}}.
\numberthis{eq:jacobi-kernel}
$$
Its Lebesgue function and Lebesgue constant are
$$
\begin{aligned}
\lambda_n^{(\alpha,\beta)}(x)
&:= 
\int_{-1}^1
\bigl|K_n^{(\alpha,\beta)}(x,t)\bigr|
w_{\alpha,\beta}(t)dt,\\[7pt]
\Lambda_n^{(\alpha,\beta)}
&:=
\sup_{-1\leq x\leq1}
\lambda_n^{(\alpha,\beta)}(x).
\end{aligned}
\numberthis{eq:lebesgue-definitions}
$$

Szeg\H{o} conjectured in 1926 that the Legendre Lebesgue constants
$\bigl(\Lambda_n^{(0,0)}\bigr)_{n\geq0}$ form a strictly increasing
sequence.  Qu and Wong
proved the conjecture in 1988 \cite{QuWong}.  Their proof used a
four-term asymptotic expansion with an explicit error bound
for $n>49$, followed by numerical verification of the remaining
degrees.  In the conclusion of their paper they described the method
as ``far too complicated'' and proposed finding ``an alternative
expression for the Lebesgue constants from which the monotonicity of
these constants is evident'' \cite[p.~187]{QuWong}.

The present note is based on an earlier preprint of the first author
\cite{Castillo2026}, which settled one of the most prominent open
problems in the area.  The equal-phase Pr\"ufer framework developed
there is used here to realise the proposal of Qu and Wong.  The
present paper applies that framework to a different problem; it does
not extend the result of \cite{Castillo2026}.

Lebesgue constants for general Jacobi series have a long history,
beginning with Rau \cite{Rau1929} and continuing through the work of
Lorch \cite{LorchI,LorchII} and Frenzen and Wong
\cite{FrenzenWong}.  That literature provides integral
representations and detailed asymptotic information.  To the best of
our knowledge, it does not contain the all-degree comparison proved
below.

We realise that proposal and generalise the Wong--Zhang extremal
comparison \cite{WongZhang} to a
one-parameter Jacobi family.  Fix $\beta>-1$ and put
$$
Q_m^{(\beta)}(x)
:=
2^{-\beta-1}(1+x)^{\beta+1}
P_{m-1}^{(0,\beta+1)}(x),
\quad m\geq1.
\numberthis{eq:Q-definition}
$$
The weighted Jacobi differentiation identity gives
$$
\bigl(Q_m^{(\beta)}\bigr)'(x)
=
\frac{m+\beta}{2^{\beta+1}}
(1+x)^\beta P_{m-1}^{(1,\beta)}(x),
\quad -1<x<1.
\numberthis{eq:Q-derivative}
$$
For $\beta=0$, this specialises to
$$
Q_m^{(0)}(x)
=
\frac{1+x}{2}P_{m-1}^{(0,1)}(x)
=
P_m^{(0,-1)}(x),
\quad m\geq1,
$$
since $(1+x)P_{m-1}^{(0,1)}(x)=2P_m^{(0,-1)}(x)$.
Moreover, the Christoffel--Darboux formula at $x=1$ shows that
$$
(1+x)^\beta K_{m-1}^{(0,\beta)}(1,x)
=
\bigl(Q_m^{(\beta)}\bigr)'(x),
\quad -1<x<1.
\numberthis{eq:kernel-primitive}
$$
Thus the endpoint value $\lambda_n^{(0,\beta)}(1)$ is exactly the
total variation of $Q_{n+1}^{(\beta)}$.

Equation~\eqref{eq:Q-derivative} has $m-1$ simple
zeros in $(-1,1)$.  Enumerate them from right to left as
$$
1>\xi_{1,m}^{(\beta)}>
\xi_{2,m}^{(\beta)}>
\cdots>
\xi_{m-1,m}^{(\beta)}>-1.
$$
Our principal result, Theorem~\ref{thm:equal-phase}, states that, for
$\beta\geq-1/3$, consecutive functions $Q_m^{(\beta)}$ are ordered
not only at these relative
extrema but at every common Pr\"ufer phase.  In particular,
$$
\left|
Q_{m+1}^{(\beta)}\bigl(\xi_{k,m+1}^{(\beta)}\bigr)
\right|
>
\left|
Q_m^{(\beta)}\bigl(\xi_{k,m}^{(\beta)}\bigr)
\right|,
\quad 1\leq k\leq m-1.
$$
The resulting total-variation formula is
$$
\lambda_n^{(0,\beta)}(1)
=
1+2\sum_{k=1}^n
\left|
Q_{n+1}^{(\beta)}\bigl(\xi_{k,n+1}^{(\beta)}\bigr)
\right|.
$$
Every term already present on the right-hand side increases strictly
when the degree is raised, and the next degree contributes one new
positive term.  Hence the endpoint sequence
$(\lambda_n^{(0,\beta)}(1))_{n\geq0}$ is strictly increasing for every
$\beta\geq-1/3$.

For $-1/3\leq\beta\leq0$, Gasper's positive product formula for
Jacobi polynomials \cite{Gasper1971} implies
$$
\Lambda_n^{(0,\beta)}
=
\lambda_n^{(0,\beta)}(1).
$$
We consequently obtain strict monotonicity of the global Lebesgue
constants throughout that parameter interval.  By reflection, the same
conclusion therefore holds for the sequence
$(\Lambda_n^{(\gamma,0)})_{n\geq0}$, where
$-1/3\leq\gamma\leq0$.
This gives an all-degree strict monotonicity theorem throughout a
non-degenerate interval of Jacobi parameters.

At $\beta=0$, write $P_j=P_j^{(0,0)}$ for the Legendre polynomial
of degree $j$.  The contiguous relation
$$
Q_m^{(0)}(x)
=
\frac{1+x}{2}P_{m-1}^{(0,1)}(x)
=
P_m^{(0,-1)}(x)
=
\frac{P_m(x)+P_{m-1}(x)}2
\numberthis{eq:legendre-contiguous}
$$
connects the problem with a conjecture of Askey on the relative
extrema of $P_m^{(0,-1)}$ \cite[Conjecture~1, p.~23]{Askey1990}.
Wong and Zhang proved these discrete inequalities by asymptotic
methods \cite{WongZhang}.  Our continuous theorem gives a direct,
non-asymptotic proof of their result.  Combined with
\eqref{eq:ell-total-variation}, it also gives a short proof of the
Qu--Wong theorem.  It is the representation and the termwise
comparison together, rather than the representation in isolation,
that make the monotonicity immediate and realise the approach
proposed by Qu and Wong.

The lower bound $-1/3$ is sharp for the continuous comparison: when
$-1<\beta<-1/3$, the amplitude ordering is reversed for all
sufficiently small positive phases.  This local reversal does not
decide the optimal range of the discrete inequalities, whose phases
are $k\pi$.  Nor do we claim a universal monotonicity theorem in the
full Jacobi parameter plane.  The cumulative endpoint kernel satisfies
the homogeneous constant-frequency second-order equation required by
our Pr\"ufer argument precisely when $\alpha=0$;
Section~\ref{sec:general} identifies the obstructing term for
$\alpha\ne0$.

The proof adapts the equal-phase method developed in
\cite{Castillo2026}.  Section~\ref{sec:radau} derives the exact
Jacobi--Radau identities.  Section~\ref{sec:phase} constructs the
Pr\"ufer phase and determines its range.  Section~\ref{sec:comparison}
proves the continuous comparison, including the boundary case
$\beta=-1/3$.  Section~\ref{sec:lebesgue} derives the endpoint and
global Lebesgue results, and Section~\ref{sec:general} explains the
scope of the method in the two-parameter family.

\section{The weighted Jacobi--Radau family}
\label{sec:radau}

Unless explicitly stated otherwise, throughout
Sections~\ref{sec:radau}--\ref{sec:comparison} we fix
$$
\beta\geq-\frac13.
$$
We suppress the superscript $(\beta)$ from $Q_m^{(\beta)}$ and from
the phase variables when no ambiguity can arise.  Put
$$
\omega_m:=\sqrt{m(m+\beta)}.
\numberthis{eq:omega-definition}
$$

For $\alpha=0$, the Jacobi norm has the simple form
$$
h_j^{(0,\beta)}
=
\int_{-1}^1
\bigl(P_j^{(0,\beta)}(t)\bigr)^2(1+t)^\beta dt
=
\frac{2^{\beta+1}}{2j+\beta+1}.
$$
Since $P_j^{(0,\beta)}(1)=1$, the endpoint kernel is
$$
K_n^{(0,\beta)}(1,x)
=
\frac1{2^{\beta+1}}
\sum_{j=0}^n
(2j+\beta+1)P_j^{(0,\beta)}(x).
$$
The endpoint form of the Christoffel--Darboux identity is
$$
K_n^{(0,\beta)}(1,x)
=
\frac{n+\beta+1}{2^{\beta+1}}
P_n^{(1,\beta)}(x).
\numberthis{eq:endpoint-kernel}
$$
These standard identities may be found, for example, in
\cite[Chapter~IV]{Szego}.

The weighted differentiation formula reads
$$
\frac{d}{dx}
\left\{
(1+x)^{\beta+1}P_{m-1}^{(0,\beta+1)}(x)
\right\}
=
(m+\beta)(1+x)^\beta P_{m-1}^{(1,\beta)}(x),
\quad -1<x<1,
\numberthis{eq:weighted-differentiation}
$$
Combining \eqref{eq:Q-definition},
\eqref{eq:endpoint-kernel}, and
\eqref{eq:weighted-differentiation} gives
\eqref{eq:Q-derivative} and \eqref{eq:kernel-primitive}.  It also
shows that $Q_m$ is absolutely continuous on $[-1,1]$, including
when $\beta<0$.  Since
$P_{m-1}^{(0,\beta+1)}(1)=1$ and
$P_{m-1}^{(1,\beta)}(1)=m$, equations
\eqref{eq:Q-definition} and \eqref{eq:Q-derivative} give
$$
Q_m(1)=1,
\quad
Q_m(-1)=0,
\quad
Q_m'(1)=\frac{\omega_m^2}{2}.
\numberthis{eq:Q-endpoints}
$$

For clarity, the substitution leading to the differential equation
is recorded explicitly.  Put $p=\beta+1$ and
$U=P_{m-1}^{(0,p)}$.  By \eqref{eq:Q-definition},
$$
\begin{aligned}
U
&=
2^p(1+x)^{-p}Q_m,\\[7pt]
U'
&=
2^p(1+x)^{-p}
\left(
Q_m'-\frac{p}{1+x}Q_m
\right),\\[7pt]
U''
&=
2^p(1+x)^{-p}
\left(
Q_m''-\frac{2p}{1+x}Q_m'
+\frac{p(p+1)}{(1+x)^2}Q_m
\right).
\end{aligned}
\numberthis{eq:radau-substitution}
$$
The Jacobi equation for $U$ is
$$
(1-x^2)U''+\bigl[p-(p+2)x\bigr]U'
+(m-1)(m+p)U=0.
\numberthis{eq:U-jacobi-equation}
$$
Substituting \eqref{eq:radau-substitution} into
\eqref{eq:U-jacobi-equation} and cancelling
$2^p(1+x)^{-p}$ gives a coefficient
$$
-2p(1-x)+p-(p+2)x
=
(p-2)x-p
$$
for $Q_m'$, while the coefficient of $Q_m$ is
$$
p+(m-1)(m+p)
=
m(m+p-1).
$$
Since $p=\beta+1$, one obtains the Jacobi--Radau equation
$$
(1-x^2)Q_m''(x)
+\bigl[(\beta-1)x-(\beta+1)\bigr]Q_m'(x)
+\omega_m^2Q_m(x)
=0,
\quad -1<x<1.
\numberthis{eq:Q-equation}
$$
This equation, rather than a large-degree approximation, is the
starting point of the comparison.

Set
$$
Y_m(\theta):=Q_m(\cos\theta),
\quad 0\leq\theta\leq\pi.
$$
The change of variables $x=\cos\theta$ gives
$$
Y_m'(\theta)
=
-Q_m'(\cos\theta)\sin\theta,
\quad
Y_m''(\theta)
=
-Q_m'(\cos\theta)\cos\theta
+Q_m''(\cos\theta)\sin^2\theta.
\numberthis{eq:angular-derivatives}
$$
Substitution of \eqref{eq:angular-derivatives} into
\eqref{eq:Q-equation} gives the angular equation
$$
Y_m''(\theta)
+H_\beta(\theta)Y_m'(\theta)
+\omega_m^2Y_m(\theta)
=0,
\quad 0<\theta<\pi,
\numberthis{eq:angular-equation}
$$
where
$$
H_\beta(\theta)
:=
\frac{\beta+1-\beta\cos\theta}{\sin\theta}.
\numberthis{eq:H-definition}
$$

Equation~\eqref{eq:Q-endpoints} and the Taylor expansion of the
cosine give, at the left endpoint,
$$
Y_m(\theta)
=
1-\frac{\omega_m^2}{4}\theta^2+O(\theta^4),
\quad
Y_m'(\theta)
=
-\frac{\omega_m^2}{2}\theta+O(\theta^3)
\numberthis{eq:left-expansion}
$$
as $\theta\downarrow0$.  At the right endpoint, put
$$
\rho:=2(\beta+1),
\quad
C_{m,\beta}
:=
(-1)^{m-1}
\frac{(\beta+2)_{m-1}}{(m-1)!2^\rho}.
\numberthis{eq:right-constants}
$$
Since
$P_{m-1}^{(0,\beta+1)}(-1)
=(-1)^{m-1}(\beta+2)_{m-1}/(m-1)!$, one has
$$
\begin{aligned}
Y_m(\pi-\varepsilon)
&=
C_{m,\beta}\varepsilon^\rho
\bigl(1+O(\varepsilon^2)\bigr),
\\[7pt]
Y_m'(\pi-\varepsilon)
&=
-\rho C_{m,\beta}\varepsilon^{\rho-1}
\bigl(1+O(\varepsilon^2)\bigr)
\end{aligned}
\numberthis{eq:right-expansion}
$$
as $\varepsilon\downarrow0$.

\section{The Pr\"ufer phase and its exact range}
\label{sec:phase}

For $0\leq\theta<\pi$, the vector
$$
\left(Y_m(\theta),-\frac{Y_m'(\theta)}{\omega_m}\right)
$$
does not vanish.  At an interior point this follows from uniqueness
for \eqref{eq:angular-equation}, and at $\theta=0$ it follows from
$Y_m(0)=1$.  Define
$$
r_m(\theta)
:=
\left(
Y_m(\theta)^2+\frac{Y_m'(\theta)^2}{\omega_m^2}
\right)^{1/2}.
\numberthis{eq:radius-definition}
$$
Then $r_m>0$ on $[0,\pi)$, while
\eqref{eq:right-constants}--\eqref{eq:right-expansion} and $\rho>1$
show that $r_m(\pi)=0$ by continuity.

The corresponding path in the unit circle has a unique continuous
lift $\phi_m\colon[0,\pi)\to\mathbb R$ satisfying
$\phi_m(0)=0$.  Thus one has the Pr\"ufer representation
$$
Y_m=r_m\cos\phi_m,
\quad
Y_m'=-\omega_m r_m\sin\phi_m.
\numberthis{eq:prufer}
$$
Differentiating \eqref{eq:prufer} and substituting
\eqref{eq:angular-equation} gives the phase and amplitude equations
$$
\begin{aligned}
\phi_m'
&=
\omega_m-H_\beta\sin\phi_m\cos\phi_m,
\\[7pt]
\frac{r_m'}{r_m}
&=
-H_\beta\sin^2\phi_m.
\end{aligned}
\numberthis{eq:phase-amplitude}
$$

\begin{lemma}\label{lem:phase-monotonicity}
For every $m\geq1$,
$$
\phi_m'(\theta)>0,
\quad 0<\theta<\pi.
$$
\end{lemma}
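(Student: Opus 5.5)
The plan is to avoid any global bound on $H_\beta$. A crude estimate $|\sin\phi_m\cos\phi_m|\le\frac12$ only gives $\phi_m'>0$ where $H_\beta<2\omega_m$, and this fails near both endpoints. Instead I will exploit the shape of $H_\beta$ together with the first-order phase equation in \eqref{eq:phase-amplitude}. Throughout, write $s:=\sin\phi_m\cos\phi_m$, so that $\phi_m'=\omega_m-H_\beta s$.

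\emph{Step 1: positivity and unimodality of $H_\beta$.} The numerator $\beta+1-\beta\cos\theta$ is bounded below by $\min(1,2\beta+1)$, which is at least $1/3$ when $\beta\ge-1/3$. Hence $H_\beta>0$ on $(0,\pi)$. Differentiating \eqref{eq:H-definition} gives
$$
H_\beta'(\theta)=\frac{\beta-(\beta+1)\cos\theta}{\sin^2\theta}.
$$
The numerator is strictly increasing in $\theta$, so $H_\beta'$ vanishes exactly once, at $\theta^*$ with $\cos\theta^*=\beta/(\beta+1)$. It satisfies $H_\beta'<0$ on $(0,\theta^*)$ and $H_\beta'>0$ on $(\theta^*,\pi)$.

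\emph{Step 2: the endpoints.} As $\theta\downarrow0$, \eqref{eq:left-expansion} gives $\phi_m\sim\omega_m\theta/2$, while $H_\beta\sim1/\theta$. Thus $\phi_m'\to\omega_m-\omega_m/2=\omega_m/2>0$. As $\theta=\pi-\varepsilon\to\pi$, \eqref{eq:right-expansion} gives $|Y_m'|\gg|Y_m|$. By \eqref{eq:prufer}, $s=-Y_mY_m'/(\omega_m r_m^2)\sim\omega_m\varepsilon/\rho$. Since $H_\beta\sim(2\beta+1)/\varepsilon$, we obtain
$$
\phi_m'\to\omega_m\Bigl(1-\frac{2\beta+1}{2\beta+2}\Bigr)=\frac{\omega_m}{2\beta+2}>0.
$$
So $\phi_m'>0$ on $(0,\delta)\cup(\pi-\delta,\pi)$ for some $\delta>0$.

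\emph{Step 3: interior, by a touching argument.} At any zero $\theta_0$ of $\phi_m'$ we have $H_\beta s=\omega_m>0$, hence $s>0$. Differentiating the phase equation and using $\phi_m'(\theta_0)=0$ gives
$$
\phi_m''(\theta_0)=-H_\beta'(\theta_0)\,s(\theta_0).
$$
Suppose $\phi_m'$ vanishes somewhere in $(0,\theta^*)$, and let $\theta_0$ be the smallest such zero; it exists by Step 2 and continuity. Since $\phi_m'>0$ on $(0,\theta_0)$, we need $\phi_m''(\theta_0)\le0$. But $H_\beta'(\theta_0)<0$ and $s>0$ give $\phi_m''(\theta_0)>0$, a contradiction. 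Symmetrically, on $(\theta^*,\pi)$ take the largest zero $\theta_0$. Since $\phi_m'>0$ to its right, we need $\phi_m''(\theta_0)\ge0$, whereas $H_\beta'>0$ forces $\phi_m''(\theta_0)<0$.

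It remains to exclude a zero at $\theta^*$ itself. There $\phi_m'\ge0$ on both sides, so $\theta^*$ would be a local minimum with $\phi_m'(\theta^*)=\phi_m''(\theta^*)=0$. Differentiating once more at such a point gives
$$
\phi_m'''(\theta^*)=-H_\beta''(\theta^*)\,s(\theta^*).
$$
From the formula in Step 1, $H_\beta''(\theta^*)=(\beta+1)\sin\theta^*/\sin^2\theta^*>0$, so $\phi_m'''(\theta^*)<0$. A nonzero third derivative is incompatible with a local minimum of $\phi_m'$, so this case is also excluded.

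I expect this last point, the degenerate case at the critical point $\theta^*$ of $H_\beta$, to be the main obstacle, together with making the endpoint asymptotics of Step 2 rigorous. For the endpoints I would write $\tan\phi_m$ explicitly via \eqref{eq:prufer} and use the stated expansions \eqref{eq:left-expansion} and \eqref{eq:right-expansion}.
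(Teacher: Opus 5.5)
Your argument is correct and follows the paper's proof: the same endpoint limits $\omega_m/2$ and $\omega_m/(2\beta+2)$ for $\phi_m'$ (the paper works with $G_m=\phi_m'/\omega_m$), the same identity $\phi_m''(\theta_0)=-H_\beta'(\theta_0)\,s(\theta_0)$ at a zero, and the same first-zero/last-zero argument on the decreasing and increasing branches of $H_\beta$. The only difference is at the critical point $\theta^*$. There the paper simply bounds $\phi_m'/\omega_m\geq 1-\sqrt{2\beta+1}/(2\omega_m)>0$, using $\min H_\beta=\sqrt{2\beta+1}$ and $4\omega_m^2>2\beta+1$. Your degenerate argument also works, but the operative fact is the sign: $\phi_m'''(\theta^*)<0$ makes $\theta^*$ a strict local maximum of $\phi_m'$ with value $0$, which contradicts positivity nearby, whereas a merely nonzero third derivative would not suffice.
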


\begin{proof}
Put
$$
G_m(\theta)
:=
\frac{\phi_m'(\theta)}{\omega_m}
=
1-
\frac{H_\beta(\theta)}{\omega_m}
\sin\phi_m(\theta)\cos\phi_m(\theta).
\numberthis{eq:G-definition}
$$
Equations \eqref{eq:prufer} and \eqref{eq:radius-definition} give
the identity
$$
\sin\phi_m\cos\phi_m
=
-\frac{\omega_mY_mY_m'}
{\omega_m^2Y_m^2+(Y_m')^2}
\numberthis{eq:sin-cos-identity}
$$
and \eqref{eq:left-expansion} give
$$
\sin\phi_m(\theta)\cos\phi_m(\theta)
=
\frac{\omega_m}{2}\theta+O(\theta^3).
$$
Equation \eqref{eq:H-definition} gives
$H_\beta(\theta)=\theta^{-1}+O(\theta)$; hence
\eqref{eq:G-definition} yields
$$
G_m(0+)=\frac12.
\numberthis{eq:G-left}
$$

At the other endpoint, \eqref{eq:right-expansion} and
\eqref{eq:sin-cos-identity} give
$$
\sin\phi_m(\pi-\varepsilon)
\cos\phi_m(\pi-\varepsilon)
=
\frac{\omega_m}{\rho}\varepsilon+O(\varepsilon^3).
$$
Moreover, \eqref{eq:H-definition} gives
$$
H_\beta(\pi-\varepsilon)
=
\frac{2\beta+1}{\varepsilon}+O(\varepsilon).
$$
Since $\rho=2(\beta+1)$, equation \eqref{eq:G-definition} yields
$$
G_m(\pi-)
=
1-\frac{2\beta+1}{\rho}
=
\frac{1}{2(\beta+1)}>0.
\numberthis{eq:G-right}
$$

It remains to exclude an interior zero.  If
$G_m(\theta_0)=0$, then \eqref{eq:G-definition} and
\eqref{eq:phase-amplitude} give
$\phi_m'(\theta_0)=0$ and
$H_\beta(\theta_0)\sin\phi_m(\theta_0)
\cos\phi_m(\theta_0)=\omega_m$.  Moreover,
$$
G_m'
=
-\frac{H_\beta'}{\omega_m}\sin\phi_m\cos\phi_m
-\frac{H_\beta}{\omega_m}\cos(2\phi_m)\phi_m'.
$$
Consequently,
$$
G_m'(\theta_0)
=
-\frac{H_\beta'(\theta_0)}{H_\beta(\theta_0)}.
\numberthis{eq:G-zero-derivative}
$$
To use this identity, set
$$
a:=2\beta+1,
\quad
t:=\tan\frac\theta2.
$$
Then $a\geq1/3$, and \eqref{eq:H-definition} gives
$$
H_\beta(\theta)
=
\frac{1+at^2}{2t}.
\numberthis{eq:H-half-angle}
$$
Thus $H_\beta$ decreases until
$t_*=a^{-1/2}$ and increases thereafter, with minimum
$H_\beta(2\arctan t_*)=\sqrt a$.  Since
$|\sin\phi_m\cos\phi_m|\leq1/2$, equations
\eqref{eq:G-definition} and \eqref{eq:H-half-angle} give, at that
minimum,
$$
G_m
\geq
1-\frac{\sqrt a}{2\omega_m}>0,
$$
because
$$
4\omega_m^2-a
=
4m(m+\beta)-(2\beta+1)
\geq
2\beta+3>0.
$$

If $G_m$ had a zero before the minimum of $H_\beta$, choose its
first zero.  By \eqref{eq:G-left}, the derivative there would be
non-positive, whereas \eqref{eq:G-zero-derivative} and $H_\beta'<0$
would make it positive.  If $G_m$ had a zero after the minimum,
choose its last zero, using \eqref{eq:G-right}; its derivative would
be non-negative, whereas \eqref{eq:G-zero-derivative} and
$H_\beta'>0$ would make it negative.  A zero at the minimum has
already been excluded.  Hence $G_m>0$ throughout $(0,\pi)$.
\end{proof}

The vanishing of $r_m$ at $\pi$ means that the terminal phase cannot
be read directly from the Pr\"ufer representation.  The following
count fixes the correct lift.

\begin{lemma}\label{lem:phase-range}
Let
$$
S_m:=\left(m-\frac12\right)\pi.
\numberthis{eq:S-definition}
$$
Then
$$
\lim_{\theta\uparrow\pi}\phi_m(\theta)=S_m.
$$
After setting $\phi_m(\pi):=S_m$, the phase is a homeomorphism
$$
\phi_m\colon[0,\pi]\longrightarrow[0,S_m].
$$
If $\Theta_m:=\phi_m^{-1}$, then
$$
\xi_{k,m}^{(\beta)}
=
\cos\Theta_m(k\pi),
\quad
\left|Q_m\bigl(\xi_{k,m}^{(\beta)}\bigr)\right|
=
r_m\bigl(\Theta_m(k\pi)\bigr),
\numberthis{eq:critical-phase}
$$
for $1\leq k\leq m-1$.  Every
$\xi_{k,m}^{(\beta)}$ is a point at which $|Q_m|$ has a strict local
maximum.
\end{lemma}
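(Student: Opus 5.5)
The proof combines the monotonicity of $\phi_m$ from Lemma~\ref{lem:phase-monotonicity} with an exact count of the zeros of $Y_m$ and $Y_m'$ on $(0,\pi)$. The endpoint expansion \eqref{eq:right-expansion} then pins down the residue class of the terminal phase.

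\emph{Existence of the limit.} By Lemma~\ref{lem:phase-monotonicity}, $\phi_m$ is strictly increasing on $[0,\pi)$ with $\phi_m(0)=0$. Hence $L:=\lim_{\theta\uparrow\pi}\phi_m(\theta)$ exists in $(0,\infty]$, and the boundedness of $\phi_m$ will come out of the counting step below.

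\emph{Counting zeros.} By \eqref{eq:prufer} and $r_m>0$ on $[0,\pi)$:
\begin{itemize}
\item zeros of $Y_m$ in $(0,\pi)$ are exactly the points where $\phi_m\in\frac{\pi}{2}+\pi\mathbb Z$;
\item zeros of $Y_m'$ in $(0,\pi)$ are exactly the points where $\phi_m\in\pi\mathbb Z$.
\end{itemize}
Since $\phi_m$ is a strictly increasing bijection of $(0,\pi)$ onto $(0,L)$, the two counts are determined by $L$.

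\emph{Zeros of $Y_m$.} By \eqref{eq:Q-definition}, for $0<\theta<\pi$ we have $Y_m(\theta)=0$ iff $P_{m-1}^{(0,\beta+1)}(\cos\theta)=0$. This Jacobi polynomial has exactly $m-1$ simple zeros in $(-1,1)$. Hence $\phi_m$ crosses exactly the values $\tfrac{\pi}{2},\tfrac{3\pi}{2},\dots,(m-\tfrac32)\pi$, and no others of the form $\frac{\pi}{2}+j\pi$.

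\emph{Zeros of $Y_m'$.} We have $Y_m'(\theta)=-Q_m'(\cos\theta)\sin\theta$, and by \eqref{eq:Q-derivative} $Q_m'$ vanishes exactly at the $m-1$ zeros of $P_{m-1}^{(1,\beta)}$. Hence $\phi_m$ crosses exactly $\pi,2\pi,\dots,(m-1)\pi$.

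\emph{Localising $L$.} Together these give $(m-1)\pi<L\le (m-\tfrac12)\pi$. Indeed, attaining $(m-1)\pi$ forces $L>(m-1)\pi$, and $L>(m-\tfrac12)\pi$ would produce an extra zero of $Y_m$.

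\emph{The terminal phase.} From \eqref{eq:right-expansion} and \eqref{eq:prufer},
$$
\tan\phi_m(\pi-\varepsilon)
=
-\frac{Y_m'}{\omega_m Y_m}
=
\frac{\rho}{\omega_m\varepsilon}\bigl(1+O(\varepsilon^2)\bigr)
\to+\infty.
$$
So $\phi_m$ approaches $\tfrac{\pi}{2}+\pi\mathbb Z$ from below. The only such value in $((m-1)\pi,(m-\tfrac12)\pi]$ is $S_m$, so $L=S_m$.

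I expect this step to need the most care. Because $r_m(\pi)=0$, the representation \eqref{eq:prufer} carries no information at $\theta=\pi$ itself. The argument therefore has to go through the ratio $Y_m'/Y_m$, whose leading coefficient is nonzero because $C_{m,\beta}\ne0$, combined with the a~priori window for $L$ from the zero count.

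\emph{Homeomorphism.} With $\phi_m(\pi):=S_m$, the map $\phi_m$ is continuous and strictly increasing on $[0,\pi]$. It is therefore a homeomorphism onto $[0,S_m]$.

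\emph{Critical points.} The points $\Theta_m(k\pi)$, $1\le k\le m-1$, are increasing in $k$ and are exactly the interior zeros of $Y_m'$. Since $\cos$ is decreasing on $[0,\pi]$, their cosines are the zeros $\xi_{k,m}^{(\beta)}$ of $Q_m'$ enumerated from right to left. At these points $|\cos\phi_m|=1$, so $|Q_m(\xi_{k,m}^{(\beta)})|=r_m(\Theta_m(k\pi))$, which is \eqref{eq:critical-phase}.

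\emph{Strict local maxima.} At $\theta_k=\Theta_m(k\pi)$ we have $Y_m'(\theta_k)=0$, so \eqref{eq:angular-equation} gives $Y_m''(\theta_k)=-\omega_m^2Y_m(\theta_k)$. Here $Y_m(\theta_k)\ne0$ because $r_m>0$. Thus $Y_m''$ has the opposite sign to $Y_m$ at $\theta_k$, and $|Y_m|$ has a strict local maximum there. Transporting this through the diffeomorphism $x=\cos\theta$ gives the same conclusion for $|Q_m|$ at $\xi_{k,m}^{(\beta)}$.
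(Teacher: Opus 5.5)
Your proof is correct and follows essentially the paper's proof. Monotonicity of $\phi_m$ and the zero count of $Y_m$ confine the terminal limit to a window below $S_m$, the endpoint expansion \eqref{eq:right-expansion} fixes its residue class, and $Y_m''=-\omega_m^2Y_m$ at the levels $\phi_m=k\pi$ gives the strict maxima. The differences are minor. You read off the residue from $\tan\phi_m\to+\infty$ (mod $\pi$) rather than from the normalised terminal vector $(0,(-1)^{m-1})$ (mod $2\pi$). You also import the zero count of $P_{m-1}^{(1,\beta)}$ to raise the lower end of the window to $(m-1)\pi$; this is harmless but unnecessary, since the window $\bigl((m-\frac{3}{2})\pi,(m-\frac{1}{2})\pi\bigr]$ obtained from the zeros of $Y_m$ alone already contains only one odd multiple of $\pi/2$.
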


\begin{proof}
The factorisation \eqref{eq:Q-definition} shows that $Q_m$ has
exactly $m-1$ simple zeros in $(-1,1)$.  Hence $Y_m$ has exactly
$m-1$ zeros in $(0,\pi)$.  On the other hand,
\eqref{eq:radius-definition} and \eqref{eq:right-expansion} give
$$
r_m(\pi-\varepsilon)
=
\frac{\rho|C_{m,\beta}|}{\omega_m}
\varepsilon^{\rho-1}
\bigl(1+O(\varepsilon^2)\bigr),
\numberthis{eq:radius-right}
$$
and combining \eqref{eq:right-expansion} with
\eqref{eq:radius-right} gives
$$
\frac1{r_m(\pi-\varepsilon)}
\left(
Y_m(\pi-\varepsilon),
-\frac{Y_m'(\pi-\varepsilon)}{\omega_m}
\right)
\longrightarrow
\bigl(0,(-1)^{m-1}\bigr).
\numberthis{eq:terminal-vector}
$$

By Lemma~\ref{lem:phase-monotonicity}, the phase is strictly
increasing.  If it were unbounded as $\theta\uparrow\pi$, it would
cross infinitely many levels at which $\cos\phi_m=0$, contrary to the
finite zero count for $Y_m$.  Thus $\phi_m$ has a finite terminal
limit $L_m$.  Put $\alpha_k=(k+1/2)\pi$ for $k\geq0$.  Since
$\phi_m(0)=0$ and $\phi_m$ is strictly increasing, the $m-1$
interior zeros of $Y_m=r_m\cos\phi_m$ occur at the successive levels
$\alpha_0,\ldots,\alpha_{m-2}$, with this list empty when $m=1$.
There can be no interior crossing of $\alpha_{m-1}$, and hence
$L_m\leq\alpha_{m-1}$.  On the other hand,
\eqref{eq:terminal-vector} gives
$L_m\equiv\alpha_{m-1}\pmod{2\pi}$.  Together with strict increase
and the preceding $m-1$ crossings, this forces
$L_m=\alpha_{m-1}=S_m$.  The homeomorphism assertion follows.

For $0<\theta<\pi$, the Pr\"ufer representation shows that
$Y_m'(\theta)=0$
exactly when $\phi_m(\theta)=k\pi$.  The first identity in
\eqref{eq:angular-derivatives} therefore shows that these levels give
precisely the critical-phase identities stated in the lemma.  At such
a point $\theta_0$,
\eqref{eq:angular-equation} gives
$Y_m''(\theta_0)=-\omega_m^2Y_m(\theta_0)$; hence
$$
\left.
\frac{d^2}{d\theta^2}Y_m(\theta)^2
\right|_{\theta=\theta_0}
=
-2\omega_m^2Y_m(\theta_0)^2<0.
$$
Thus $|Q_m|$ has a strict local maximum at each critical point.
\end{proof}

For completeness, \eqref{eq:prufer} gives
$\cot\phi_m=-\omega_mY_m/Y_m'$.  Hence
\eqref{eq:right-expansion} yields
$$
\cot\phi_m(\pi-\varepsilon)
=
\frac{\omega_m}{\rho}\varepsilon+O(\varepsilon^3).
$$
Writing $\delta_m(\varepsilon)
:=S_m-\phi_m(\pi-\varepsilon)$, one has
$\cot(S_m-\delta_m)=\tan\delta_m$.  Therefore
$$
\phi_m(\pi-\varepsilon)
=
S_m-\frac{\omega_m}{\rho}\varepsilon
+O(\varepsilon^3).
$$

\section{Comparison at equal phase}
\label{sec:comparison}

The proof of Theorem~\ref{thm:equal-phase} adapts the equal-phase
architecture developed in \cite[Section~2]{Castillo2026}.  First, each
Pr\"ufer phase is shown to be strictly increasing; consecutive phases,
and hence their inverses, are then ordered.  A small-phase expansion
establishes the initial order of
$q_j(s)=H_\beta(\Theta_j(s))/\omega_j$, and a first-contact argument
based on $\mathcal L_\beta=-H_\beta'/H_\beta^2$ propagates that order
across the common phase interval.  Finally, integration of the
logarithmic derivative of $R_{m+1}/R_m$ yields the amplitude
comparison.

\subsection{Ordering the phases}

\begin{lemma}\label{lem:phase-comparison}
For every $m\geq1$,
$$
\phi_{m+1}(\theta)>\phi_m(\theta),
\quad 0<\theta\leq\pi.
\numberthis{eq:phase-ordering}
$$
Consequently,
$$
\Theta_{m+1}(s)<\Theta_m(s),
\quad 0<s\leq S_m.
\numberthis{eq:inverse-ordering}
$$
\end{lemma}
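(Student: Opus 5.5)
The plan is a Sturm-type comparison of the two phase equations in \eqref{eq:phase-amplitude}. Both phases start at $\phi_m(0)=\phi_{m+1}(0)=0$ and satisfy
$$
\phi_j'=\omega_j-H_\beta\sin\phi_j\cos\phi_j,\quad j=m,m+1,
$$
with the same coefficient $H_\beta$. The frequencies satisfy $\omega_{m+1}>\omega_m$, since $(m+1)(m+1+\beta)-m(m+\beta)=2m+1+\beta>0$. The proof therefore has three steps: establish the order near $\theta=0$, exclude a first point of contact, and treat the endpoint $\theta=\pi$ separately.

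\emph{Initial order.} I will work with $D(\theta):=\phi_{m+1}(\theta)-\phi_m(\theta)$ on $[0,\pi)$.
\begin{itemize}
\item By \eqref{eq:G-left}, $\phi_j'(0+)=\omega_j/2$ for each $j$.
\item Since $\phi_j(0)=0$, this gives $\phi_j(\theta)=\tfrac{\omega_j}{2}\theta+o(\theta)$.
\item Hence $D(\theta)=\tfrac{\omega_{m+1}-\omega_m}{2}\theta+o(\theta)>0$ for all sufficiently small $\theta>0$.
\end{itemize}
To justify $\phi_j'(0+)=\omega_j/2$ directly, note that \eqref{eq:left-expansion} gives $\tan\phi_j=-Y_j'/(\omega_jY_j)=\tfrac{\omega_j}{2}\theta+O(\theta^3)$. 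Because $\phi_j$ is continuous with $\phi_j(0)=0$, this yields the expansion of $\phi_j$ itself and avoids any issue with the singularity of $H_\beta$ at $0$.

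\emph{First contact.} Suppose $D$ vanishes somewhere in $(0,\pi)$, and let $\theta_0$ be its first zero. Then $D>0$ on $(0,\theta_0)$ and $D(\theta_0)=0$, so $D'(\theta_0)\le0$. However, at $\theta_0$ the two phases are equal, so the $H_\beta$ terms cancel exactly and
$$
D'(\theta_0)=\omega_{m+1}-\omega_m>0,
$$
which is a contradiction. Here $H_\beta(\theta_0)$ is finite because $\theta_0$ is interior. Hence $\phi_{m+1}>\phi_m$ on $(0,\pi)$. At $\theta=\pi$, Lemma~\ref{lem:phase-range} gives $\phi_{m+1}(\pi)=S_{m+1}=S_m+\pi>S_m=\phi_m(\pi)$. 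This proves \eqref{eq:phase-ordering}.

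\emph{Inverse ordering.} Fix $0<s\le S_m$ and put $\theta:=\Theta_m(s)\in(0,\pi]$. By \eqref{eq:phase-ordering}, $\phi_{m+1}(\theta)>\phi_m(\theta)=s$. Also $s\le S_m<S_{m+1}$, so $s$ lies in the range of $\phi_{m+1}$. Since $\phi_{m+1}$ is a strictly increasing homeomorphism $[0,\pi]\to[0,S_{m+1}]$ by Lemmas~\ref{lem:phase-monotonicity} and \ref{lem:phase-range}, it follows that $\Theta_{m+1}(s)<\theta=\Theta_m(s)$, which is \eqref{eq:inverse-ordering}.

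The only delicate point is the initial order at the singular endpoint $\theta=0$, where both phases vanish and $H_\beta$ blows up. This is handled by the first-order expansion above, which separates the two phases by their distinct slopes $\omega_j/2$. The endpoint $\theta=\pi$, where $H_\beta$ is also singular, needs no contact argument because the terminal values $S_m<S_{m+1}$ are known exactly.
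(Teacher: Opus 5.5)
Your proposal is correct and follows the paper's proof essentially step for step. The slope $\omega_j/2$ at $\theta=0$ gives the initial order, and at a first contact the $H_\beta$ terms cancel, so the difference has derivative $\omega_{m+1}-\omega_m>0$ there, a contradiction. The endpoint $\theta=\pi$ is settled by $S_{m+1}>S_m$, and the inverse ordering follows from strict monotonicity of $\phi_{m+1}$; your single treatment of $0<s\le S_m$ via $\Theta_m(s)\in(0,\pi]$ is just a compact form of the paper's separate check at $s=S_m$.
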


\begin{proof}
Equations \eqref{eq:left-expansion} and \eqref{eq:prufer} give
$$
\phi_j(\theta)
=
\frac{\omega_j}{2}\theta+O(\theta^3).
\numberthis{eq:phase-left-first}
$$
By \eqref{eq:omega-definition} and \eqref{eq:phase-left-first},
$\omega_{m+1}^2-\omega_m^2=2m+\beta+1>0$, so the difference
$\phi_{m+1}-\phi_m$ is positive near zero.  If it had a first zero
$\theta_0$ in $(0,\pi)$, its derivative there would be non-positive.
At a point where the phases agree, however,
the first identity in \eqref{eq:phase-amplitude} gives
$$
\bigl(\phi_{m+1}-\phi_m\bigr)'(\theta_0)
=
\omega_{m+1}-\omega_m>0,
$$
a contradiction.  The inequality at $\pi$ follows from
$S_{m+1}>S_m$.

For $0<s<S_m$, apply \eqref{eq:phase-ordering} at
$\Theta_m(s)$ and use the strict increase of $\phi_{m+1}$.  At
$s=S_m$, use \eqref{eq:S-definition},
$\Theta_m(S_m)=\pi$, and
$\phi_{m+1}(\pi)=S_{m+1}>S_m$.
\end{proof}

\subsection{The normalised damping coefficient}

For $0<s<S_j$, define
$$
q_j(s)
:=
\frac{H_\beta(\Theta_j(s))}{\omega_j},
\quad
D_j(s)
:=
1-q_j(s)\sin s\cos s.
\numberthis{eq:qD-definition}
$$
The first identity in \eqref{eq:phase-amplitude} and
Lemma~\ref{lem:phase-monotonicity} give the positive phase-speed
identity
$$
D_j(s)
=
\frac{\phi_j'(\Theta_j(s))}{\omega_j}>0.
\numberthis{eq:D-positive}
$$

\begin{lemma}\label{lem:q-local}
For every $m\geq1$, there exists $\delta>0$ such that
$$
q_{m+1}(s)<q_m(s),
\quad 0<s<\delta.
$$
\end{lemma}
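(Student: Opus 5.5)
The plan is to compute a two-term small-phase expansion of $q_j(s)$ and compare the coefficients for $j=m$ and $j=m+1$. The leading term turns out to be independent of $j$, so the comparison is decided at the next order. By \eqref{eq:phase-left-first}, $\Theta_j(s)=2s/\omega_j+O(s^3)$. Together with $H_\beta(\theta)=\theta^{-1}+O(\theta)$, this gives $q_j(s)=1/(2s)+O(s)$ for every $j$. So the leading singular term cancels in $q_{m+1}-q_m$, and the sign must come from the $O(s)$ coefficient. I will therefore aim for an expansion
$$
q_j(s)=\frac1{2s}+\kappa_j s+O(s^3),\quad s\downarrow0,
$$
with $\kappa_j$ explicit in $\omega_j$ and $\beta$, and then show $\kappa_{m+1}<\kappa_m$.

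The first step is to push \eqref{eq:left-expansion} one order further. Differentiating \eqref{eq:Q-equation} once and evaluating at $x=1$ gives $-4Q_m''(1)+(\beta-1+\omega_m^2)Q_m'(1)=0$, hence $Q_m''(1)=\omega_m^2(\omega_m^2+\beta-1)/8$. Substituting $x-1=-\theta^2/2+\theta^4/24+O(\theta^6)$ then gives $Y_j(\theta)=1-\frac{\omega_j^2}{4}\theta^2+c_j\theta^4+O(\theta^6)$, with $c_j$ explicit, and the corresponding expansion of $Y_j'$.

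Next, I insert these into $\tan\phi_j=-Y_j'/(\omega_jY_j)$ to get
$$
\phi_j(\theta)=\frac{\omega_j}{2}\theta+d_j\theta^3+O(\theta^5).
$$
Inverting this series gives $\Theta_j(s)=\frac{2s}{\omega_j}-\frac{16d_j}{\omega_j^4}s^3+O(s^5)$. Independently, from \eqref{eq:H-half-angle}, $H_\beta(\theta)=\theta^{-1}+e_\beta\theta+O(\theta^3)$ with $e_\beta=(2a-1)/12$ up to verification, where $a=2\beta+1$. Combining,
$$
q_j(s)=\frac{1}{\omega_j\Theta_j(s)}+\frac{e_\beta\Theta_j(s)}{\omega_j},
\qquad
\kappa_j=\frac{8d_j}{\omega_j^3}+\frac{2e_\beta}{\omega_j^2}.
$$
I expect $d_j/\omega_j^3$ to be a constant plus a multiple of $(\beta+\text{const})/\omega_j^2$. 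Then $\kappa_j=\gamma_0+\gamma_1(\beta)/\omega_j^2$, and since $\omega_{m+1}>\omega_m$, the claim reduces to $\gamma_1(\beta)>0$.

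The main obstacle is the bookkeeping that produces $\gamma_1(\beta)$, and the case where it vanishes. In view of the stated sharpness at $\beta=-1/3$, I expect $\gamma_1(\beta)$ to be a positive multiple of $3\beta+1$. That would give the lemma immediately for $\beta>-1/3$, since the $O(s^3)$ remainders are uniform near $0$ for fixed $m$, so a $\delta>0$ exists. At $\beta=-1/3$ the $s$-coefficients would coincide, and I would carry the same procedure one order further. This means taking $Q_m'''(1)$ from a second differentiation of \eqref{eq:Q-equation} and the $\theta^5$ term of $\phi_j$, and then checking that the $s^3$ coefficient of $q_{m+1}-q_m$ is negative. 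A cleaner alternative I would try first is to avoid the series inversion. The idea is to write the Riccati equation satisfied by $q_j$ as a function of $s$, using $dq_j/ds=H_\beta'(\Theta_j)/(\omega_j\phi_j'(\Theta_j))$ and \eqref{eq:D-positive}. Matching coefficients of an ansatz $\frac{1}{2s}+\kappa_js+\mu_js^3$ in that equation then determines $\kappa_j,\mu_j$ directly, in terms of $\omega_j^{-2}$ and $\beta$.
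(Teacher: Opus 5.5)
Your plan has the same architecture as the paper's proof. The $1/(2s)$ term is $j$-independent, the sign is decided by the $s$-coefficient, and $\beta=-1/3$ needs one more order. Your route to the cubic phase coefficient through $Q_m''(1)$ is also sound. Carried out, it gives $c_j=\omega_j^2(3\omega_j^2+3\beta+1)/192$ and $d_j=\omega_j(\omega_j^2-3\beta-1)/48$, which is exactly what the paper obtains from the phase equation. The gap is that the lemma \emph{is} the sign of these coefficients, and you leave that sign as an expectation. Worse, the two explicit formulas you do commit to are wrong in a way that matters.

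First, $\tan(\theta/2)=\theta/2+\theta^3/24+O(\theta^5)$ gives $H_\beta(\theta)=\theta^{-1}+\frac{3a-1}{12}\theta+O(\theta^3)$, so $e_\beta=(3\beta+1)/6$, not $(2a-1)/12$. Second, $\Theta_j(s)=\frac{2s}{\omega_j}\bigl(1-\frac{8d_j}{\omega_j^3}s^2+O(s^4)\bigr)$ gives $\frac1{\omega_j\Theta_j(s)}=\frac1{2s}+\frac{4d_j}{\omega_j^3}s+O(s^3)$, so $\kappa_j=\frac{4d_j}{\omega_j^3}+\frac{2e_\beta}{\omega_j^2}$, not $\frac{8d_j}{\omega_j^3}+\frac{2e_\beta}{\omega_j^2}$. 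If you insert the correct $d_j$ into your formulas as written, you get $\kappa_j=\frac16+\frac{\beta}{6\omega_j^2}$. That predicts $q_{m+1}>q_m$ near $0$ for $-1/3\leq\beta<0$, so your expectation would have been contradicted by your own computation. With the corrections, $\kappa_j=\frac1{12}+\frac{3\beta+1}{4\omega_j^2}$, and the lemma for $\beta>-1/3$ follows.

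At $\beta=-1/3$ the same problem arises: nothing in the sharpness statement fixes the sign of the next coefficient, so it has to be computed. You need $H_{-1/3}(\theta)=\theta^{-1}+\theta^3/180+O(\theta^5)$ and the phase to fifth order, $\phi_j(\theta)=\frac{\omega_j}2\theta+\frac{\omega_j^3}{48}\theta^3+\bigl(\frac{\omega_j^5}{960}-\frac{\omega_j}{2160}\bigr)\theta^5+O(\theta^7)$. Reversion then gives $q_j(s)=\frac1{2s}+\frac{s}{12}+\bigl(-\frac1{90}+\frac1{27\omega_j^4}\bigr)s^3+O(s^5)$. Only this shows that the difference $\frac1{27}\bigl(\omega_{m+1}^{-4}-\omega_m^{-4}\bigr)s^3$ is negative.

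Your Riccati alternative is the efficient way to do both computations. Write $H_\beta=\theta^{-1}+h_1\theta+h_3\theta^3+\cdots$. Combine $q_j'=H_\beta'(\Theta_j)/(\omega_j^2D_j)$ with $H_\beta'=-H_\beta^2+3h_1+(h_1^2+5h_3)\theta^2+O(\theta^4)$. Matching gives $4\kappa_j=\frac13+\frac{6h_1}{\omega_j^2}$ at once, and, when $h_1=0$, $3\mu_j=-\frac1{30}+\frac{20h_3}{\omega_j^4}$. This also makes visible that the $j$-dependence enters only through the regular part of $H_\beta$. To make the matching legitimate, you must first know that $q_j-\frac1{2s}$ is odd and analytic at $0$. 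Otherwise matching only produces a formal solution of a singular equation. The paper gets this from $Y_j$ being even, analytic and positive near $\theta=0$.
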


\begin{proof}
Expansion of \eqref{eq:H-definition} at $\theta=0$ gives
$$
H_\beta(\theta)
=
\frac1\theta+\frac{3\beta+1}{6}\theta+O(\theta^3).
\numberthis{eq:H-left-expansion}
$$
The function $Y_j$ is even and analytic near zero and is positive
there, so the local identity
$$
\phi_j(\theta)
=
\arctan\left(-\frac{Y_j'(\theta)}{\omega_jY_j(\theta)}\right)
$$
shows that the phase is odd and analytic.  We shall use
$$
\sin\phi\cos\phi
=
\phi-\frac23\phi^3+O(\phi^5)
\numberthis{eq:local-trig-expansion}
$$
Substitution of \eqref{eq:H-left-expansion} and
\eqref{eq:local-trig-expansion} into the first identity in
\eqref{eq:phase-amplitude} makes the coefficient calculation
explicit.  Put
$h_1=(3\beta+1)/6$ and
$\phi_j(\theta)=a_1\theta+a_3\theta^3+O(\theta^5)$.  Comparing the
constant and $\theta^2$ coefficients gives
$$
2a_1=\omega_j,
\quad
4a_3=\frac23a_1^3-h_1a_1.
$$
Thus
$$
\phi_j(\theta)
=
\frac{\omega_j}{2}\theta
+\frac{\omega_j(\omega_j^2-3\beta-1)}{48}\theta^3
+O(\theta^5).
\numberthis{eq:phase-left-expansion}
$$
Reversion of \eqref{eq:phase-left-expansion} yields
$$
\Theta_j(s)
=
\frac{2s}{\omega_j}
-\frac{\omega_j^2-3\beta-1}{3\omega_j^3}s^3
+O(s^5).
\numberthis{eq:inverse-left-expansion}
$$
Substituting \eqref{eq:inverse-left-expansion} into
\eqref{eq:H-left-expansion} and then using
\eqref{eq:qD-definition} gives
$$
q_j(s)
=
\frac1{2s}
+\left(
\frac1{12}+\frac{3\beta+1}{4\omega_j^2}
\right)s
+O(s^3).
\numberthis{eq:q-left-expansion}
$$
If $\beta>-1/3$, it follows that
$$
q_{m+1}(s)-q_m(s)
=
\frac{3\beta+1}{4}
\left(
\frac1{\omega_{m+1}^2}-\frac1{\omega_m^2}
\right)s
+O(s^3)
\numberthis{eq:q-local-difference}
$$
and this is negative for sufficiently small $s>0$.

At the boundary $\beta=-1/3$, the leading coefficient in
\eqref{eq:q-local-difference} vanishes and one more term is
necessary.  Expansion of \eqref{eq:H-definition} and the Taylor
formula for $\sin\phi\cos\phi$ give, respectively,
$$
H_{-1/3}(\theta)
=
\frac1\theta+\frac1{180}\theta^3+O(\theta^5),
\numberthis{eq:boundary-H-expansion}
$$
and
$$
\sin\phi\cos\phi
=
\phi-\frac23\phi^3+\frac{2}{15}\phi^5+O(\phi^7).
\numberthis{eq:boundary-trig-expansion}
$$
Writing
$\phi_j(\theta)=a_1\theta+a_3\theta^3+a_5\theta^5+O(\theta^7)$
and substituting \eqref{eq:boundary-H-expansion} and
\eqref{eq:boundary-trig-expansion} in the first identity of
\eqref{eq:phase-amplitude} gives
$$
2a_1=\omega_j,
\quad
4a_3=\frac23a_1^3,
\quad
6a_5
=
2a_1^2a_3-\frac{2}{15}a_1^5-\frac{a_1}{180}.
$$
Consequently,
$$
\phi_j(\theta)
=
\frac{\omega_j}{2}\theta
+\frac{\omega_j^3}{48}\theta^3
+\left(
\frac{\omega_j^5}{960}-\frac{\omega_j}{2160}
\right)\theta^5
+O(\theta^7).
\numberthis{eq:boundary-phase-expansion}
$$
Reversion of \eqref{eq:boundary-phase-expansion} gives
$$
\Theta_j(s)
=
\frac{2s}{\omega_j}
-\frac{s^3}{3\omega_j}
+\left(
\frac1{10\omega_j}+\frac4{135\omega_j^5}
\right)s^5
+O(s^7).
\numberthis{eq:boundary-inverse-expansion}
$$
Substitution of \eqref{eq:boundary-inverse-expansion} in
\eqref{eq:boundary-H-expansion}, followed by
\eqref{eq:qD-definition}, gives
$$
q_j(s)
=
\frac1{2s}+\frac{s}{12}
+\left(
-\frac1{90}+\frac1{27\omega_j^4}
\right)s^3
+O(s^5).
\numberthis{eq:boundary-q-expansion}
$$
Subtracting \eqref{eq:boundary-q-expansion} at $j=m+1$ and $j=m$
gives
$$
q_{m+1}(s)-q_m(s)
=
\frac1{27}
\left(
\frac1{\omega_{m+1}^4}-\frac1{\omega_m^4}
\right)s^3
+O(s^5)
\numberthis{eq:q-boundary-difference}
$$
Since $\omega_{m+1}>\omega_m$, \eqref{eq:q-boundary-difference} is
negative for sufficiently small $s>0$.
\end{proof}

The local order propagates through the entire common phase range.

\begin{lemma}\label{lem:q-comparison}
For every $m\geq1$,
$$
q_{m+1}(s)<q_m(s),
\quad 0<s<S_m.
\numberthis{eq:q-comparison}
$$
\end{lemma}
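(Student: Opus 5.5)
The plan is a first-contact argument driven by a first-order differential equation for $q_j$ in the phase variable $s$. Since $\Theta_j=\phi_j^{-1}$, \eqref{eq:D-positive} gives $\Theta_j'(s)=1/\bigl(\omega_jD_j(s)\bigr)$. Differentiating $q_j=H_\beta(\Theta_j)/\omega_j$ and writing $H_\beta'=-\mathcal L_\beta H_\beta^2$, with $\mathcal L_\beta:=-H_\beta'/H_\beta^2=(1/H_\beta)'$, yields
$$
q_j'(s)=-\frac{\mathcal L_\beta\bigl(\Theta_j(s)\bigr)\,q_j(s)^2}{1-q_j(s)\sin s\cos s},
\quad 0<s<S_j.
$$
The key structural feature is that $\omega_j$ appears on the right only through $q_j$ itself and through $\Theta_j$.

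By Lemma~\ref{lem:q-local}, $q_{m+1}-q_m<0$ on some interval $(0,\delta)$. Suppose it has a first zero $s_0\in(0,S_m)$. Then $(q_{m+1}-q_m)'(s_0)\geq0$. At $s_0$ the two functions share the value $q$, and hence also share $D=1-q\sin s_0\cos s_0>0$. The displayed equation therefore gives
$$
(q_{m+1}-q_m)'(s_0)=-\frac{q^2}{D}\Bigl(\mathcal L_\beta\bigl(\Theta_{m+1}(s_0)\bigr)-\mathcal L_\beta\bigl(\Theta_m(s_0)\bigr)\Bigr).
$$
By \eqref{eq:inverse-ordering}, $\Theta_{m+1}(s_0)<\Theta_m(s_0)$. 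So a contradiction follows as soon as $\mathcal L_\beta$ is strictly larger at the smaller angle.

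To check this, I would use the half-angle form \eqref{eq:H-half-angle}, with $a=2\beta+1$ and $u=t^2=\tan^2(\theta/2)$. Then $1/H_\beta=2t/(1+au)$, and a direct computation gives
$$
\mathcal L_\beta=\frac{(1+u)(1-au)}{(1+au)^2}.
$$
The numerator of $d\mathcal L_\beta/du$ is a positive multiple of $(1-3a)-a(3-a)u$.
\begin{itemize}
\item For $1/3\leq a\leq3$ this is negative for every $u>0$; at $a=1/3$ it vanishes only at $u=0$. So $\mathcal L_\beta$ is strictly decreasing on $(0,\pi)$ and the contradiction is immediate.
\item This covers $-1/3\leq\beta\leq1$. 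It also explains why the boundary case $\beta=-1/3$ needed the higher-order expansion in Lemma~\ref{lem:q-local}: there $\mathcal L_\beta'(0)=0$.
\end{itemize}

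The main obstacle is the range $a>3$, i.e.\ $\beta>1$. There $\mathcal L_\beta$ decreases only up to $u_*=(3a-1)/\bigl(a(a-3)\bigr)>1/a$, and then increases while remaining negative. For contacts with both angles on the same side of $u_*$ the argument goes through only partially, so I would exploit the contact relation itself. At $s_0$, equality of the $q$'s means
$$
\frac{H_\beta\bigl(\Theta_{m+1}(s_0)\bigr)}{H_\beta\bigl(\Theta_m(s_0)\bigr)}=\frac{\omega_{m+1}}{\omega_m}>1,
$$
so the smaller angle has the larger $H_\beta$. Since $H_\beta$ is minimal at $u=1/a$, this forces $\Theta_{m+1}(s_0)$ to lie to the left of that minimum, in the region where $\mathcal L_\beta>0$ and is decreasing. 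I would then show that $\mathcal L_\beta(\Theta_{m+1})>\mathcal L_\beta(\Theta_m)$ still holds:
\begin{itemize}
\item If $\Theta_m(s_0)$ lies where $1-au\leq0$, then $\mathcal L_\beta(\Theta_m)\leq0<\mathcal L_\beta(\Theta_{m+1})$.
\item Otherwise both angles lie in the decreasing branch $u<1/a<u_*$, and monotonicity applies.
\end{itemize}
This sign-and-branch case analysis is the step I expect to require the most care. In particular, one must verify that $\Theta_{m+1}(s_0)$ indeed satisfies $u<1/a$, and must treat separately the degenerate possibility $\Theta_{m+1}(s_0)$ exactly at the minimum of $H_\beta$.
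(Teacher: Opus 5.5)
Your proposal is correct and follows the paper's proof essentially step for step. It uses the same equation $q_j'=-\mathcal L_\beta(\Theta_j)q_j^2/D_j$ and the same first-contact argument. It also uses the same branch analysis: $H_\beta(\Theta_{m+1}(s_0))>H_\beta(\Theta_m(s_0))$ places $\Theta_{m+1}(s_0)$ on the decreasing side of $H_\beta$, after which the sign of $1-a\tan^2(\Theta_m(s_0)/2)$ decides the case. The only difference is organisational: the paper runs this branch argument uniformly for all $a\geq1/3$ rather than first splitting off $1/3\leq a\leq3$. The ``degenerate'' case you flag, with $\Theta_{m+1}(s_0)$ exactly at the minimum of $H_\beta$, needs no separate treatment. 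It is already excluded because $H_\beta$ is strictly increasing on the closed branch $t\geq a^{-1/2}$.
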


\begin{proof}
The inverse-function theorem and \eqref{eq:D-positive} give
$$
\Theta_j'(s)
=
\frac1{\omega_jD_j(s)}.
\numberthis{eq:inverse-phase-derivative}
$$
Define
$$
\mathcal L_\beta(\theta)
:=
-\frac{H_\beta'(\theta)}{H_\beta(\theta)^2}.
\numberthis{eq:L-definition}
$$
Differentiating the first definition in \eqref{eq:qD-definition} and
using \eqref{eq:inverse-phase-derivative} and
\eqref{eq:L-definition} gives
$$
q_j'(s)
=
-\frac{
\mathcal L_\beta(\Theta_j(s))q_j(s)^2
}{D_j(s)}.
\numberthis{eq:q-equation}
$$

In terms of $a=2\beta+1$ and $t=\tan(\theta/2)$,
$$
\widehat{\mathcal L}_\beta(t)
:=
\mathcal L_\beta(2\arctan t)
=
\frac{(1-at^2)(1+t^2)}{(1+at^2)^2}.
\numberthis{eq:L-half-angle}
$$
Moreover,
$$
\frac{d}{dt}\widehat{\mathcal L}_\beta(t)
=
\frac{
2t\{a(a-3)t^2+1-3a\}
}{(1+at^2)^3}<0,
\quad 0<t<a^{-1/2}.
\numberthis{eq:L-derivative}
$$
Indeed, if $1/3\leq a\leq3$, the expression in braces is negative
for every $t>0$.  If $a>3$, its supremum on
$0<t<a^{-1/2}$ is bounded above by
$$
(a-3)+1-3a=-2a-2<0.
$$

Set $f=q_{m+1}-q_m$.  By Lemma~\ref{lem:q-local}, $f<0$ near
zero.  Suppose that $f$ has a first zero $s_0$ in $(0,S_m)$ and
write
$$
q_{m+1}(s_0)=q_m(s_0)=:q>0,
$$
where positivity follows from $H_\beta>0$ on $(0,\pi)$.
At this point $D_{m+1}(s_0)=D_m(s_0)=:D>0$.  Put
$$
t_+
:=
\tan\frac{\Theta_{m+1}(s_0)}2,
\quad
t_0
:=
\tan\frac{\Theta_m(s_0)}2.
$$
By Lemma~\ref{lem:phase-comparison}, $t_+<t_0$, while equality of
the two $q$'s gives
$$
H_\beta(2\arctan t_+)
=
q\omega_{m+1}
>
q\omega_m
=
H_\beta(2\arctan t_0).
\numberthis{eq:H-crossing}
$$
If $t_+\geq a^{-1/2}$, then $t_+<t_0$ places both points on the
strictly increasing branch of \eqref{eq:H-half-angle}, contrary to
\eqref{eq:H-crossing}.  Hence $t_+<a^{-1/2}$.  If
$t_0\geq a^{-1/2}$, formula \eqref{eq:L-half-angle} gives
$$
\widehat{\mathcal L}_\beta(t_+)
>0\geq\widehat{\mathcal L}_\beta(t_0).
$$
If $t_0<a^{-1/2}$, the same strict inequality follows from the strict
decrease in \eqref{eq:L-derivative}.  Hence \eqref{eq:q-equation}
gives
$$
f'(s_0)
=
-\frac{q^2}{D}
\left\{
\widehat{\mathcal L}_\beta(t_+)
-\widehat{\mathcal L}_\beta(t_0)
\right\}<0.
$$
This is incompatible with a first crossing from negative values,
which would require $f'(s_0)\geq0$.  Thus no first zero exists, and
the asserted global ordering of $q_{m+1}$ and $q_m$ follows.
\end{proof}

\subsection{The continuous comparison}

For $0\leq s\leq S_j$, put
$$
R_j(s)
:=
r_j\bigl(\Theta_j(s)\bigr).
\numberthis{eq:R-definition}
$$
Thus $R_j(0)=1$ and $R_j(S_j)=0$.

\begin{theorem}[Continuous equal-phase comparison]
\label{thm:equal-phase}
Let $\beta\geq-1/3$.  For every $m\geq1$,
$$
\Theta_{m+1}(s)<\Theta_m(s)
\quad\hbox{and}\quad
R_{m+1}(s)>R_m(s),
\quad 0<s\leq S_m.
$$
\end{theorem}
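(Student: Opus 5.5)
The first assertion is already \eqref{eq:inverse-ordering} of Lemma~\ref{lem:phase-comparison}, so only the amplitude inequality needs proof.  Our approach is to write $\log R_j$ as an integral in the common phase variable $s$.  The amplitude equation in \eqref{eq:phase-amplitude} and the inverse-derivative formula \eqref{eq:inverse-phase-derivative} then let us compare the integrands pointwise, using Lemma~\ref{lem:q-comparison}.

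Concretely, for $0<s<S_j$ the chain rule, the second identity of \eqref{eq:phase-amplitude} and \eqref{eq:inverse-phase-derivative} give
$$
\frac{R_j'(s)}{R_j(s)}
=
-\frac{H_\beta(\Theta_j(s))\sin^2 s}{\omega_j D_j(s)}
=
-\frac{q_j(s)\sin^2 s}{1-q_j(s)\sin s\cos s}.
$$
Put $F(q,s):=q\sin^2 s/(1-q\sin s\cos s)$ on the set where the denominator is positive.  Then $\partial_q F=\sin^2 s/(1-q\sin s\cos s)^2\geq0$, with strict inequality unless $\sin s=0$.  The segment joining $q_{m+1}(s)$ and $q_m(s)$ stays in the admissible region, because $1-q\sin s\cos s$ is affine in $q$ and is positive at both endpoints by \eqref{eq:D-positive}.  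Lemma~\ref{lem:q-comparison} then gives
$$
\frac{d}{ds}\log\frac{R_{m+1}}{R_m}(s)
=
F(q_m(s),s)-F(q_{m+1}(s),s)>0
$$
for all $s\in(0,S_m)$ except the isolated points $s\in\pi\mathbb Z$.  Hence $\log(R_{m+1}/R_m)$ is strictly increasing on $(0,S_m)$.

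The step that needs care is the left endpoint, since both logarithmic derivatives behave like $-q_j\sin^2 s\approx -s/2$ and are individually integrable near $0$.  Since $R_j(0)=1$ for both $j$, and $R_j$ is continuous at $0$ with integrable logarithmic derivative there, $\log(R_{m+1}/R_m)(0+)=0$.  Strict increase then gives $R_{m+1}(s)>R_m(s)$ for $0<s<S_m$.  The expansions \eqref{eq:q-left-expansion} confirm that the $s^{-1}$ singularities of $q_j$ cancel against $\sin^2 s$, so nothing is lost at $0$.

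At $s=S_m$ we have $R_m(S_m)=r_m(\pi)=0$.  Since $S_m<S_{m+1}$, the point $\Theta_{m+1}(S_m)$ lies in $[0,\pi)$, where $r_{m+1}>0$.  Hence $R_{m+1}(S_m)>0=R_m(S_m)$.  The strict inequality at interior points cannot fail at isolated points, since we integrate a nonnegative derivative that is positive almost everywhere.

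The main obstacle is the pointwise sign analysis of $F$: positivity of $D_j$ must be invoked at both $q$ values simultaneously.  Once this monotonicity in $q$ is in hand, Lemma~\ref{lem:q-comparison} does all the remaining work.
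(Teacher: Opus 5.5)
Your proof is correct and follows the paper's route. It uses the same formula $-q_j\sin^2 s/D_j$ for the logarithmic derivative, takes the sign from Lemma~\ref{lem:q-comparison} and \eqref{eq:D-positive}, gets $R_{m+1}/R_m\to1$ at $s=0$ (your continuity argument is a valid substitute for the paper's expansion), and checks $s=S_m$ separately. Your monotonicity-in-$q$ step is the paper's cancellation \eqref{eq:amplitude-cancellation} in another form, since the difference quotient of $q\mapsto q\sin^2 s/(1-q\sin s\cos s)$ is exactly $\sin^2 s/(D_mD_{m+1})$. One harmless slip: the log-derivatives near $0$ behave like $-s$, not $-s/2$, because $D_j\to1/2$.
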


\begin{proof}
The first inequality is \eqref{eq:inverse-ordering}.  For
$0<s<S_m$, the chain rule, the second identity in
\eqref{eq:phase-amplitude}, \eqref{eq:qD-definition}, and
\eqref{eq:inverse-phase-derivative} give
$$
\frac{d}{ds}\log R_j(s)
=
-\frac{q_j(s)\sin^2s}{D_j(s)}.
\numberthis{eq:R-log-derivative}
$$
By \eqref{eq:qD-definition}, with $u=\sin s\cos s$, the mixed terms
cancel exactly:
$$
\begin{aligned}
q_mD_{m+1}-q_{m+1}D_m
&=
q_m(1-q_{m+1}u)-q_{m+1}(1-q_mu)\\[7pt]
&=
q_m-q_{m+1}.
\end{aligned}
\numberthis{eq:amplitude-cancellation}
$$
Subtracting \eqref{eq:R-log-derivative} for the two consecutive
indices and using \eqref{eq:amplitude-cancellation} gives
$$
\frac{d}{ds}
\log\frac{R_{m+1}(s)}{R_m(s)}
=
\frac{
\bigl(q_m(s)-q_{m+1}(s)\bigr)\sin^2s
}{D_m(s)D_{m+1}(s)}.
\numberthis{eq:log-ratio}
$$
By \eqref{eq:q-comparison} and \eqref{eq:D-positive}, the right-hand
side is non-negative and is strictly positive away from the discrete
set $\pi\mathbb Z$.  It remains only to verify the initial value used
in the integration.  Equations \eqref{eq:q-left-expansion} and
\eqref{eq:qD-definition} give
$$
D_j(s)=\frac12+O(s^2).
$$
Equation \eqref{eq:R-log-derivative} therefore gives
$$
\frac{d}{ds}\log R_j(s)=-s+O(s^3),
\quad
R_j(s)=1+O(s^2).
$$
In particular, $R_{m+1}(s)/R_m(s)\to1$ as $s\downarrow0$.
Integration of \eqref{eq:log-ratio} over
$[\varepsilon,s]$, followed by passage to the limit
$\varepsilon\downarrow0$, gives
$$
R_{m+1}(s)>R_m(s),
\quad 0<s<S_m.
$$
At $s=S_m$, \eqref{eq:R-definition} gives
$R_m(S_m)=r_m(\pi)=0$, whereas \eqref{eq:inverse-ordering} gives
$\Theta_{m+1}(S_m)<\pi$ and hence $R_{m+1}(S_m)>0$.
\end{proof}

\begin{corollary}\label{cor:extrema}
Let $\beta\geq-1/3$.  For every $m\geq2$ and
$1\leq k\leq m-1$,
$$
\begin{aligned}
\xi_{k,m+1}^{(\beta)}
&>
\xi_{k,m}^{(\beta)},
\\[7pt]
\left|
Q_{m+1}^{(\beta)}\bigl(\xi_{k,m+1}^{(\beta)}\bigr)
\right|
&>
\left|
Q_m^{(\beta)}\bigl(\xi_{k,m}^{(\beta)}\bigr)
\right|.
\end{aligned}
$$
\end{corollary}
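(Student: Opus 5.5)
The corollary follows by specialising Theorem~\ref{thm:equal-phase} to the critical phases $s=k\pi$ and translating back through Lemma~\ref{lem:phase-range}. First check the range. For $m\geq2$ and $1\leq k\leq m-1$ we have $k\pi\leq(m-1)\pi<S_m=(m-\tfrac12)\pi$, so $s=k\pi$ lies in $(0,S_m)$. Since $k\leq m$, it is also an admissible critical phase for degree $m+1$.

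By \eqref{eq:critical-phase}, applied at degree $m$ and at degree $m+1$,
$$
\xi_{k,m}^{(\beta)}=\cos\Theta_m(k\pi),
\quad
\xi_{k,m+1}^{(\beta)}=\cos\Theta_{m+1}(k\pi),
$$
and
$$
\bigl|Q_j(\xi_{k,j}^{(\beta)})\bigr|=r_j(\Theta_j(k\pi))=R_j(k\pi),
\quad j=m,m+1,
$$
the last equality by \eqref{eq:R-definition}.

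The first inequality of Theorem~\ref{thm:equal-phase} gives $\Theta_{m+1}(k\pi)<\Theta_m(k\pi)$. Both values lie in $(0,\pi)$, where the cosine is strictly decreasing. Hence $\xi_{k,m+1}^{(\beta)}>\xi_{k,m}^{(\beta)}$.

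The second inequality of Theorem~\ref{thm:equal-phase}, $R_{m+1}(k\pi)>R_m(k\pi)$, is exactly the inequality between the moduli of the relative extrema.

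The only point needing care is the indexing: the enumeration of the zeros of $Q_j'$ from right to left must match the successive levels $k\pi$ of the increasing phase. Lemma~\ref{lem:phase-range} supplies this. The phase increases strictly from $0$, so the $k$th critical point in $\theta$, which is the $k$th from the right in $x=\cos\theta$, is $\Theta_j(k\pi)$. No further estimate is required.
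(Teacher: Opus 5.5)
Your proposal is correct and follows the paper's proof: you set $s=k\pi$ in Theorem~\ref{thm:equal-phase}, translate both inequalities through the critical-phase identities \eqref{eq:critical-phase}, and use the strict decrease of the cosine on $(0,\pi)$ for the position inequality. Your explicit checks that $k\pi\in(0,S_m)$ and that $k\pi$ is an admissible critical phase for degree $m+1$ are details the paper leaves implicit.
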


\begin{proof}
Take $s=k\pi$ in Theorem~\ref{thm:equal-phase} and use the
critical-phase identities \eqref{eq:critical-phase}.  The
position inequality follows because
the cosine is strictly decreasing on $(0,\pi)$.
\end{proof}

The parameter endpoint in Theorem~\ref{thm:equal-phase} is sharp for
the continuous statement.

\begin{proposition}[Sharpness at small phase]
\label{prop:sharpness}
Let $-1<\beta<-1/3$ and $m\geq1$.  For $j=m,m+1$, put
$$
Y_j(\theta):=Q_j^{(\beta)}(\cos\theta),
\quad
r_j(\theta):=
\left(
Y_j(\theta)^2+\frac{Y_j'(\theta)^2}{\omega_j^2}
\right)^{1/2},
$$
and choose the local phase $\phi_j$ by
$$
Y_j=r_j\cos\phi_j,
\quad
Y_j'=-\omega_jr_j\sin\phi_j,
\quad
\phi_j(0)=0.
$$
Let $\Theta_j$ denote the local inverse of $\phi_j$ and set
$R_j=r_j\circ\Theta_j$.  Then
$$
R_{m+1}(s)-R_m(s)
=
\frac{3\beta+1}{4}
\left(
\frac1{\omega_m^2}-\frac1{\omega_{m+1}^2}
\right)s^4
+O(s^6).
$$
Consequently,
$R_{m+1}(s)<R_m(s)$ for all sufficiently small $s>0$.
\end{proposition}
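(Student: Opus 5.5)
The plan is to repeat the small-phase analysis of Lemma~\ref{lem:q-local} and the proof of Theorem~\ref{thm:equal-phase}, now for $-1<\beta<-1/3$, and to push the expansion of $\log(R_{m+1}/R_m)$ to its first nonvanishing order.

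\textbf{Step 1: locality.} The derivations of \eqref{eq:phase-amplitude}, \eqref{eq:H-left-expansion}, \eqref{eq:phase-left-expansion}, \eqref{eq:inverse-left-expansion} and \eqref{eq:q-left-expansion} use only the angular equation \eqref{eq:angular-equation} near $\theta=0$ and the left-endpoint data \eqref{eq:left-expansion}. These hold for every $\beta>-1$, since $\omega_j^2=j(j+\beta)>0$ for $j\geq1$. Only Lemma~\ref{lem:phase-monotonicity} and the global lemmas invoked $\beta\geq-1/3$.

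Near $\theta=0$, $Y_j$ is even, analytic and positive, so $\phi_j$ is odd and analytic with $\phi_j'(0)=\omega_j/2>0$. Hence $\Theta_j$ exists and is odd and analytic on a small interval $[0,\delta)$. Consequently $q_j=H_\beta\circ\Theta_j/\omega_j$ is odd with a simple pole, and
$$
q_j(s)=\frac1{2s}+c_js+O(s^3),
\quad
c_j:=\frac1{12}+\frac{3\beta+1}{4\omega_j^2}.
$$
Together with $\sin s\cos s=s-\tfrac23s^3+O(s^5)$, this gives $D_j(s)=\tfrac12+O(s^2)>0$ for small $s$. Also $D_j$ is even in $s$, by parity of $q_j$ and of $\sin s\cos s$.

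\textbf{Step 2: the log-ratio ODE.} The identities \eqref{eq:R-log-derivative}, \eqref{eq:amplitude-cancellation} and \eqref{eq:log-ratio} are purely algebraic consequences of \eqref{eq:phase-amplitude} and \eqref{eq:inverse-phase-derivative}. They therefore hold on $(0,\delta)$, giving
$$
\frac{d}{ds}\log\frac{R_{m+1}(s)}{R_m(s)}
=\frac{(q_m(s)-q_{m+1}(s))\sin^2s}{D_m(s)D_{m+1}(s)}.
$$
By Step~1, $q_m-q_{m+1}=(c_m-c_{m+1})s+O(s^3)$ is odd, $\sin^2s=s^2+O(s^4)$ is even, and $D_mD_{m+1}=\tfrac14+O(s^2)$ is even. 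So the right-hand side is odd and equals $4(c_m-c_{m+1})s^3+O(s^5)$.

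\textbf{Step 3: integration.} We have $R_j(s)=1+O(s^2)$, so $R_{m+1}/R_m\to1$ as $s\downarrow0$, exactly as at the end of the proof of Theorem~\ref{thm:equal-phase}. Integrating from $0$ gives
$$
\log\frac{R_{m+1}}{R_m}=(c_m-c_{m+1})s^4+O(s^6),
$$
and the odd integrand makes the remainder even, hence $O(s^6)$. Therefore
$$
R_{m+1}-R_m=R_m\bigl(e^{(c_m-c_{m+1})s^4+O(s^6)}-1\bigr)=(c_m-c_{m+1})s^4+O(s^6).
$$
Here we also used $R_m=1+O(s^2)$, which only affects the $O(s^6)$ term.

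\textbf{Step 4: sign.} Since
$$
c_m-c_{m+1}=\frac{3\beta+1}4\Bigl(\frac1{\omega_m^2}-\frac1{\omega_{m+1}^2}\Bigr),
$$
this is the stated formula. Because $\omega_{m+1}^2-\omega_m^2=2m+\beta+1>0$ and $3\beta+1<0$, the coefficient of $s^4$ is negative, so $R_{m+1}(s)<R_m(s)$ for all sufficiently small $s>0$.

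\textbf{Main obstacle.} The delicate point is Step~1: checking that the coefficient $c_j$ in \eqref{eq:q-left-expansion} was derived without any hypothesis on $\beta$, and that the parity bookkeeping really yields $O(s^6)$ rather than $O(s^5)$. If the parity argument feels fragile, I would instead verify the odd/even structure directly from the odd analytic series of $\phi_j$ in \eqref{eq:phase-left-expansion}.
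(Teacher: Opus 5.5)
Your proposal is correct and follows the paper's proof essentially step for step: you observe that the expansion \eqref{eq:q-left-expansion} and the identities \eqref{eq:R-log-derivative}--\eqref{eq:log-ratio} are purely local and hold for every $\beta>-1$. You then read off the $s^3$ term of the log-derivative, integrate from $0$, and exponentiate against $R_m=1+O(s^2)$, exactly as the paper does. The parity bookkeeping you add is harmless but not needed, since multiplying out $q_m-q_{m+1}=(c_m-c_{m+1})s+O(s^3)$, $\sin^2s=s^2+O(s^4)$ and $(D_mD_{m+1})^{-1}=4+O(s^2)$ already gives an $O(s^5)$ remainder in the integrand, hence $O(s^6)$ after integration.
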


\begin{proof}
For any fixed $\beta>-1$, the phase has a strictly increasing local
branch at the origin.  The calculation leading to
\eqref{eq:q-left-expansion} uses only $\beta>-1$ and therefore remains
valid there.  Likewise, the algebraic identity
\eqref{eq:log-ratio} is local and remains valid for this branch.
Substitution of \eqref{eq:q-left-expansion} into
\eqref{eq:log-ratio}, using \eqref{eq:qD-definition}, gives
$D_j(s)=1/2+O(s^2)$ and hence
$$
\frac{d}{ds}
\log\frac{R_{m+1}(s)}{R_m(s)}
=
(3\beta+1)
\left(
\frac1{\omega_m^2}-\frac1{\omega_{m+1}^2}
\right)s^3
+O(s^5).
\numberthis{eq:sharp-log-derivative}
$$
Integration of \eqref{eq:sharp-log-derivative} gives
$$
\log\frac{R_{m+1}(s)}{R_m(s)}
=
\frac{3\beta+1}{4}
\left(
\frac1{\omega_m^2}-\frac1{\omega_{m+1}^2}
\right)s^4
+O(s^6).
\numberthis{eq:sharp-log-ratio}
$$
Moreover, \eqref{eq:R-log-derivative} gives
$$
\frac{d}{ds}\log R_m(s)=-s+O(s^3),
$$
so $R_m(s)=1+O(s^2)$.  Exponentiating
\eqref{eq:sharp-log-ratio} and multiplying by this expansion of
$R_m$ gives the asserted small-phase expansion.  Its leading
coefficient is negative when $\beta<-1/3$.
\end{proof}

\begin{remark}
Proposition~\ref{prop:sharpness} concerns the continuous comparison
near $s=0$.  The relative extrema occur only at the separated phases
$s=k\pi$, so the proposition neither disproves the discrete
inequalities below $-1/3$ nor determines their optimal parameter
range.
\end{remark}

\section{Lebesgue functions and constants}
\label{sec:lebesgue}

We now restore the superscript $(\beta)$ on $Q_m^{(\beta)}$.  Define
the endpoint quantity
$$
\ell_n^{(\beta)}
:=
\lambda_n^{(0,\beta)}(1)
=
\int_{-1}^1
\bigl|K_n^{(0,\beta)}(1,t)\bigr|(1+t)^\beta dt.
\numberthis{eq:ell-definition}
$$
By \eqref{eq:kernel-primitive},
$$
\ell_n^{(\beta)}
=
\int_{-1}^1
\left|\bigl(Q_{n+1}^{(\beta)}\bigr)'(t)\right|dt.
\numberthis{eq:ell-variation}
$$
Thus $\ell_n^{(\beta)}$ is the total variation of
$Q_{n+1}^{(\beta)}$.

\begin{lemma}[Exact total-variation formula]
\label{lem:total-variation}
For $\beta\geq-1/3$ and $n\geq0$,
$$
\ell_n^{(\beta)}
=
1+2\sum_{k=1}^n
\left|
Q_{n+1}^{(\beta)}
\bigl(\xi_{k,n+1}^{(\beta)}\bigr)
\right|,
\numberthis{eq:ell-total-variation}
$$
where the sum is empty when $n=0$.
\end{lemma}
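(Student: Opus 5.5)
By \eqref{eq:ell-variation}, $\ell_n^{(\beta)}$ is the total variation of $Q_{n+1}^{(\beta)}$ on $[-1,1]$. The plan is to compute this variation by splitting $[-1,1]$ at the critical points of $Q_{n+1}^{(\beta)}$, where it is monotone on each piece. Put $m=n+1$. By \eqref{eq:Q-derivative}, $Q_m'$ vanishes in $(-1,1)$ exactly at the $m-1=n$ simple zeros $\xi_{k,m}^{(\beta)}$ of $P_{m-1}^{(1,\beta)}$. So $Q_m'$ keeps a constant sign on each of the $n+1$ open intervals
$$
(\xi_{1,m}^{(\beta)},1),\ (\xi_{2,m}^{(\beta)},\xi_{1,m}^{(\beta)}),\ \ldots,\ (-1,\xi_{n,m}^{(\beta)}).
$$
Because $Q_m$ is absolutely continuous on $[-1,1]$ (Section~\ref{sec:radau}, including the case $\beta<0$), the integral of $|Q_m'|$ over each interval equals the absolute difference of the endpoint values of $Q_m$. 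Write $\xi_0:=1$ and $\xi_{m}:=-1$. Then
$$
\ell_n^{(\beta)}=\sum_{k=0}^{n}\bigl|Q_m(\xi_k)-Q_m(\xi_{k+1})\bigr|.
$$
For $n=0$ this is $|Q_1(1)-Q_1(-1)|=1$ by \eqref{eq:Q-endpoints}.

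The key step is to show that consecutive critical values alternate in sign, so that every absolute difference becomes a sum of moduli. Lemma~\ref{lem:phase-range} gives $\xi_{k,m}=\cos\Theta_m(k\pi)$, and at that point $Y_m=r_m\cos(k\pi)=(-1)^k r_m(\Theta_m(k\pi))$. Hence
$$
Q_m(\xi_k)=(-1)^k\bigl|Q_m(\xi_k)\bigr|,
$$
and $r_m>0$ on $[0,\pi)$ makes these values nonzero. The case $k=0$ also fits this pattern, since $Q_m(1)=1$.

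With this sign pattern, each interior term satisfies
$$
|Q_m(\xi_k)-Q_m(\xi_{k+1})|=|Q_m(\xi_k)|+|Q_m(\xi_{k+1})|
$$
for $0\le k\le n-1$. The last term is $|Q_m(\xi_n)-Q_m(-1)|=|Q_m(\xi_n)|$, because $Q_m(-1)=0$. Summing, each interior critical value appears twice, $Q_m(1)=1$ appears once, and $Q_m(-1)=0$ contributes nothing. This gives exactly $1+2\sum_{k=1}^n|Q_m(\xi_k)|$.

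The one point needing care is the endpoint $x=-1$ when $\beta<0$. There $Q_m'$ may be unbounded, since it carries the factor $(1+x)^\beta$, so the integral over the last interval is improper. Absolute continuity of $Q_m$, together with $Q_m(-1)=0$, still gives the value $|Q_m(\xi_n)|$ for that interval. Beyond this, the argument only uses the zero count and the sign alternation from Lemma~\ref{lem:phase-range}, so no real obstacle is expected.
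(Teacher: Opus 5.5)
Your proposal is correct and matches the paper's proof essentially step for step. Both split the variation at the critical points, use the critical-phase identities of Lemma~\ref{lem:phase-range} to get $Q_{n+1}^{(\beta)}(\xi_{k,n+1}^{(\beta)})=(-1)^k a_k$ with $a_k>0$, and then sum using $Q_{n+1}^{(\beta)}(1)=1$, $Q_{n+1}^{(\beta)}(-1)=0$ and absolute continuity; the only cosmetic difference is that the paper treats $n=0$ separately via $(Q_1^{(\beta)})'>0$, while you absorb it into the general splitting.
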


\begin{proof}
If $n=0$, equation \eqref{eq:Q-derivative} shows that
$(Q_1^{(\beta)})'(x)>0$ for $-1<x<1$; hence
\eqref{eq:ell-variation} and \eqref{eq:Q-endpoints} give
$\ell_0^{(\beta)}=1$.  Assume henceforth that $n\geq1$ and set
$$
a_k
:=
\left|
Q_{n+1}^{(\beta)}
\bigl(\xi_{k,n+1}^{(\beta)}\bigr)
\right|,
\quad 1\leq k\leq n.
$$
By \eqref{eq:critical-phase}, the Pr\"ufer phase at
$\xi_{k,n+1}^{(\beta)}$ is $k\pi$, and hence
$$
Q_{n+1}^{(\beta)}
\bigl(\xi_{k,n+1}^{(\beta)}\bigr)
=
(-1)^ka_k.
$$
As $x$ decreases from $1$ to $-1$, the successive values are
$$
1,-a_1,a_2,\ldots,(-1)^na_n,0,
$$
where the endpoint values follow from \eqref{eq:Q-endpoints}.
Since $Q_{n+1}^{(\beta)}$ is absolutely continuous, splitting the
variation in \eqref{eq:ell-variation} at these critical points gives
$$
\begin{aligned}
\ell_n^{(\beta)}
&=
(1+a_1)
+\sum_{k=1}^{n-1}(a_k+a_{k+1})
+a_n\\[7pt]
&=
1+2\sum_{k=1}^na_k.
\end{aligned}
$$
This proves \eqref{eq:ell-total-variation}.
\end{proof}

\begin{theorem}[Endpoint monotonicity]
\label{thm:endpoint-monotonicity}
If $\beta\geq-1/3$, then
$$
\ell_{n+1}^{(\beta)}>\ell_n^{(\beta)},
\quad n\geq0.
$$
\end{theorem}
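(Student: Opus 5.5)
The plan is to compare the total-variation formulas of Lemma~\ref{lem:total-variation} at consecutive degrees term by term, using Corollary~\ref{cor:extrema} (equivalently, Theorem~\ref{thm:equal-phase} at the phases $s=k\pi$).

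For $n=0$, Lemma~\ref{lem:total-variation} gives $\ell_0^{(\beta)}=1$ and
$$
\ell_1^{(\beta)}=1+2\bigl|Q_2^{(\beta)}(\xi_{1,2}^{(\beta)})\bigr|.
$$
Here $\xi_{1,2}^{(\beta)}$ is a relative extremum of $Q_2^{(\beta)}$ at Pr\"ufer phase $\pi$. Lemma~\ref{lem:phase-range} gives $|Q_2^{(\beta)}(\xi_{1,2}^{(\beta)})|=r_2(\Theta_2(\pi))$. This quantity is positive because $r_2>0$ on $[0,\pi)$ and $\Theta_2(\pi)<\pi$, since $\pi<S_2=3\pi/2$. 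Hence $\ell_1^{(\beta)}>\ell_0^{(\beta)}$.

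For $n\geq1$, set $m=n+1\geq2$. Lemma~\ref{lem:total-variation} gives
$$
\ell_{n+1}^{(\beta)}-\ell_n^{(\beta)}
=2\sum_{k=1}^{n}\Bigl(\bigl|Q_{m+1}^{(\beta)}(\xi_{k,m+1}^{(\beta)})\bigr|-\bigl|Q_m^{(\beta)}(\xi_{k,m}^{(\beta)})\bigr|\Bigr)
+2\bigl|Q_{m+1}^{(\beta)}(\xi_{m,m+1}^{(\beta)})\bigr|.
$$
Each bracket is strictly positive by Corollary~\ref{cor:extrema}, applied with $1\leq k\leq m-1=n$. The final term is positive by the same reasoning as in the case $n=0$. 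The extremum $\xi_{m,m+1}^{(\beta)}$ has phase $m\pi<S_{m+1}=(m+\tfrac12)\pi$, so $\Theta_{m+1}(m\pi)<\pi$, and therefore $r_{m+1}(\Theta_{m+1}(m\pi))>0$. Consequently $\ell_{n+1}^{(\beta)}-\ell_n^{(\beta)}>0$.

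There is no substantial obstacle, because all the analytic work is contained in Theorem~\ref{thm:equal-phase}. The only points needing care are the index alignment, and checking that the new extremum has nonzero amplitude. The index alignment means the $k$-th extremum of $Q_{n+2}$ is compared with the $k$-th extremum of $Q_{n+1}$ at the same phase $k\pi$, and this phase lies in $(0,S_{n+1}]$ as Theorem~\ref{thm:equal-phase} requires. The new extremum has nonzero amplitude because $r_{m+1}$ vanishes only at $\theta=\pi$.
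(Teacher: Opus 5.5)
Your proposal is correct and matches the paper's proof: you subtract the total-variation formulas of Lemma~\ref{lem:total-variation}, apply Corollary~\ref{cor:extrema} term by term, and observe that the new term is positive because its phase $(n+1)\pi<S_{n+2}$ gives $\Theta_{n+2}((n+1)\pi)<\pi$, where $r_{n+2}>0$. Your separate treatment of $n=0$ is harmless; the paper simply notes that the sum is empty in that case.
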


\begin{proof}
Subtracting \eqref{eq:ell-total-variation} at indices $n+1$ and $n$
gives
$$
\begin{aligned}
\ell_{n+1}^{(\beta)}-\ell_n^{(\beta)}
&=
2\sum_{k=1}^n
\left\{
\left|
Q_{n+2}^{(\beta)}
\bigl(\xi_{k,n+2}^{(\beta)}\bigr)
\right|
-
\left|
Q_{n+1}^{(\beta)}
\bigl(\xi_{k,n+1}^{(\beta)}\bigr)
\right|
\right\}\\[7pt]
&\quad
+2\left|
Q_{n+2}^{(\beta)}
\bigl(\xi_{n+1,n+2}^{(\beta)}\bigr)
\right|.
\end{aligned}
\numberthis{eq:ell-difference}
$$
Every term in the sum in \eqref{eq:ell-difference} is positive by
Corollary~\ref{cor:extrema}, and the final term is positive by
\eqref{eq:critical-phase} and \eqref{eq:radius-definition}.  The sum
is empty when $n=0$, so this case is included.
\end{proof}

On a subinterval of the parameter range, the endpoint value equals
the global Lebesgue constant.  We include the short argument because it marks
the precise distinction between the endpoint and global results.

\begin{proposition}[Endpoint maximality]
\label{prop:endpoint-maximality}
If $-1/2\leq\beta\leq0$, then, for every $n\geq0$,
$$
\Lambda_n^{(0,\beta)}
=
\lambda_n^{(0,\beta)}(1)
=
\ell_n^{(\beta)}.
\numberthis{eq:endpoint-maximality}
$$
\end{proposition}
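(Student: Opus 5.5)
The plan is to use Gasper's positive product formula for Jacobi polynomials \cite{Gasper1971}. For $\alpha\geq\beta>-1$ and $\alpha+\beta+1\geq0$ (in particular for $\alpha=0$, $-1/2\leq\beta\leq0$), it gives a probability measure $\mu_{x,t}$ on $[-1,1]$ with
$$
P_j^{(0,\beta)}(x)P_j^{(0,\beta)}(t)
=
P_j^{(0,\beta)}(1)\int_{-1}^1 P_j^{(0,\beta)}(z)\,d\mu_{x,t}(z)
=
\int_{-1}^1 P_j^{(0,\beta)}(z)\,d\mu_{x,t}(z)
$$
for every $j\geq0$ and $x,t\in[-1,1]$, using $P_j^{(0,\beta)}(1)=1$. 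Equivalently, there is a nonnegative generalised translation operator $T_x$ on $L^1(w_{0,\beta})$ with $T_x P_j^{(0,\beta)}=P_j^{(0,\beta)}(x)P_j^{(0,\beta)}$. Summing over $j$ with weights $1/h_j^{(0,\beta)}$ gives
$$
K_n^{(0,\beta)}(x,t)=\int_{-1}^1 K_n^{(0,\beta)}(1,z)\,d\mu_{x,t}(z).
$$

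The key step is the $L^1$ contraction property of translation. The measures are symmetric in the sense that
$$
\int_{-1}^1\!\int_{-1}^1 g(z)\,d\mu_{x,t}(z)\,w_{0,\beta}(t)\,dt
=
\int_{-1}^1 g(z)\,w_{0,\beta}(z)\,dz
$$
for every integrable $g$. This follows by testing on $g=P_j^{(0,\beta)}$: both sides equal $h_0\delta_{j0}$, by orthogonality, since $\int P_j(t)P_j(x)\ldots$ reduces via the product formula and $j=0$ gives mass one. Density of polynomials, together with positivity, then extends it to all nonnegative integrable $g$. With this in hand,
$$
\lambda_n^{(0,\beta)}(x)
\leq
\int_{-1}^1\!\int_{-1}^1\bigl|K_n^{(0,\beta)}(1,z)\bigr|\,d\mu_{x,t}(z)\,w_{0,\beta}(t)\,dt
=
\ell_n^{(\beta)}.
$$
Hence $\Lambda_n^{(0,\beta)}\leq\ell_n^{(\beta)}$, and the reverse inequality is trivial because $x=1$ is admissible in the supremum. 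The identity $\lambda_n^{(0,\beta)}(1)=\ell_n^{(\beta)}$ is just \eqref{eq:ell-definition}.

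I expect the main obstacle to be justifying the parameter range of Gasper's theorem and the measure-theoretic extension of the symmetric identity. The range needs to be cited precisely: positivity of the product formula holds for $\alpha\geq\beta$, $\alpha+\beta\geq-1$, and with $\alpha=0$ this gives exactly $-1\leq\beta\leq0$ intersected with $\beta\geq-1$ and $\beta\leq\alpha$. This should be checked against \cite{Gasper1971} to confirm it covers $-1/2\leq\beta\leq0$. For the extension I would argue directly with the polynomial $K_n(1,\cdot)$, approximating $|K_n(1,\cdot)|$ by polynomials uniformly on $[-1,1]$ via Weierstrass. Since each $\mu_{x,t}$ is a probability measure and $w_{0,\beta}$ is integrable, uniform approximation passes to the limit on both sides, and no density argument in $L^1$ is needed.
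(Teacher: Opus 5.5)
Your proposal is correct and follows essentially the same route as the paper. Both arguments use Gasper's positive product formula, sum it to translate the endpoint kernel $K_n^{(0,\beta)}(1,\cdot)$, and check invariance of $(1+t)^\beta\,dt$ on $P_j^{(0,\beta)}$ by orthogonality. Both then extend it to the continuous function $\bigl|K_n^{(0,\beta)}(1,\cdot)\bigr|$ by uniform polynomial approximation and conclude with the triangle inequality, with equality at $x=1$. Your fallback of working only with that continuous function, rather than with all integrable $g$, is exactly what the paper does, and it suffices: $t\mapsto\int g\,d\mu_{x,t}$ is then a uniform limit of polynomials in $t$, so no measurability issue arises.
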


\begin{proof}
Put $d\mu_\beta(t)=(1+t)^\beta dt$.  In the range
$-1/2\leq\beta\leq0$, Gasper's positive Jacobi product formula
\cite{Gasper1971} provides a measurable probability kernel
$(x,t)\mapsto\nu_{x,t}$ on $[-1,1]$ such that
$$
P_j^{(0,\beta)}(x)P_j^{(0,\beta)}(t)
=
\int_{-1}^1P_j^{(0,\beta)}(u)d\nu_{x,t}(u).
\numberthis{eq:Gasper-product}
$$
Let
$$
(\tau_x f)(t):=\int_{-1}^1f(u)d\nu_{x,t}(u).
$$
For each $j\geq0$, \eqref{eq:Gasper-product} and orthogonality give
$$
\begin{aligned}
\int_{-1}^1
(\tau_xP_j^{(0,\beta)})(t)d\mu_\beta(t)
&=
P_j^{(0,\beta)}(x)
\int_{-1}^1P_j^{(0,\beta)}(t)d\mu_\beta(t)\\[7pt]
&=
\int_{-1}^1P_j^{(0,\beta)}(t)d\mu_\beta(t).
\end{aligned}
$$
Indeed, for $j=0$ both sides are the mass of $\mu_\beta$, while for
$j\geq1$ both sides vanish.  Thus the Jacobi measure is invariant
under $\tau_x$ on polynomials.  Since $\tau_x$ is a positive
contraction, uniform polynomial approximation gives
$$
\int_{-1}^1(\tau_xf)(t)d\mu_\beta(t)
=
\int_{-1}^1f(t)d\mu_\beta(t)
\numberthis{eq:translation-invariance}
$$
for every continuous $f$; see also
\cite[Sections~1.4 and 3.2]{Bavinck}.  Summing
\eqref{eq:Gasper-product} with the coefficients in
\eqref{eq:jacobi-kernel} gives
$$
K_n^{(0,\beta)}(x,t)
=
\int_{-1}^1
K_n^{(0,\beta)}(1,u)d\nu_{x,t}(u).
\numberthis{eq:kernel-translation}
$$
The triangle inequality in \eqref{eq:kernel-translation}, Tonelli's
theorem, \eqref{eq:translation-invariance}, and the definitions
\eqref{eq:lebesgue-definitions} and \eqref{eq:ell-definition}, applied
to the continuous function $|K_n^{(0,\beta)}(1,\cdot)|$,
consequently give
$$
\begin{aligned}
\lambda_n^{(0,\beta)}(x)
&\leq
\int_{-1}^1\int_{-1}^1
\bigl|K_n^{(0,\beta)}(1,u)\bigr|
d\nu_{x,t}(u)(1+t)^\beta dt\\[7pt]
&=
\int_{-1}^1
\bigl|K_n^{(0,\beta)}(1,u)\bigr|(1+u)^\beta du
=
\ell_n^{(\beta)}.
\end{aligned}
\numberthis{eq:endpoint-bound}
$$
Equality holds at $x=1$ by \eqref{eq:ell-definition}, and
\eqref{eq:endpoint-bound} and \eqref{eq:lebesgue-definitions} yield
\eqref{eq:endpoint-maximality}.
\end{proof}

\begin{theorem}[Global Jacobi Lebesgue constants]
\label{thm:global-monotonicity}
If $-1/3\leq\beta\leq0$, then
$$
\Lambda_{n+1}^{(0,\beta)}
>
\Lambda_n^{(0,\beta)},
\quad n\geq0.
$$
The same conclusion holds for the sequence
$(\Lambda_n^{(\gamma,0)})_{n\geq0}$ when
$-1/3\leq\gamma\leq0$.
\end{theorem}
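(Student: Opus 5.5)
The first assertion follows by combining two earlier results. For $-1/3\leq\beta\leq0$, the range condition of Proposition~\ref{prop:endpoint-maximality} ($-1/2\leq\beta\leq0$) holds. That proposition gives $\Lambda_n^{(0,\beta)}=\ell_n^{(\beta)}$ for every $n\geq0$. Also $\beta\geq-1/3$, so Theorem~\ref{thm:endpoint-monotonicity} gives $\ell_{n+1}^{(\beta)}>\ell_n^{(\beta)}$. Chaining the two yields $\Lambda_{n+1}^{(0,\beta)}=\ell_{n+1}^{(\beta)}>\ell_n^{(\beta)}=\Lambda_n^{(0,\beta)}$. All the analytic content lives in the earlier results. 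The proof only checks that the two parameter ranges overlap on $[-1/3,0]$.

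The second assertion should follow from the reflection symmetry $P_j^{(\gamma,0)}(-x)=(-1)^jP_j^{(0,\gamma)}(x)$. The steps are as follows.

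\begin{itemize}
\item The weights satisfy $w_{\gamma,0}(-t)=w_{0,\gamma}(t)$, and the norms are invariant: $h_j^{(\gamma,0)}=h_j^{(0,\gamma)}$.
\item In each product $P_j^{(\gamma,0)}(-x)P_j^{(\gamma,0)}(-t)$ the sign factors $(-1)^j$ appear twice and cancel. Hence $K_n^{(\gamma,0)}(-x,-t)=K_n^{(0,\gamma)}(x,t)$.
\item Substituting $t\mapsto-t$ in the integral defining the Lebesgue function gives $\lambda_n^{(\gamma,0)}(-x)=\lambda_n^{(0,\gamma)}(x)$.
\item The supremum is taken over $[-1,1]$, which is symmetric under $x\mapsto-x$. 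Therefore $\Lambda_n^{(\gamma,0)}=\Lambda_n^{(0,\gamma)}$.
\end{itemize}

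Applying the first assertion with $\beta=\gamma\in[-1/3,0]$ then finishes the proof.

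There is no real obstacle here. The only point needing care is the bookkeeping in the reflection: verifying that the sign factors cancel in the kernel and that the norms are preserved. Both are immediate from the standard symmetry relation, for instance \cite[Chapter~IV]{Szego}.
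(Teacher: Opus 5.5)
Your proposal is correct and follows the paper's proof. The $(0,\beta)$ case combines Theorem~\ref{thm:endpoint-monotonicity} with Proposition~\ref{prop:endpoint-maximality} on the overlap $[-1/3,0]$. The $(\gamma,0)$ case uses the same reflection chain, $h_j^{(\gamma,0)}=h_j^{(0,\gamma)}$, $K_n^{(\gamma,0)}(x,t)=K_n^{(0,\gamma)}(-x,-t)$ and $\lambda_n^{(\gamma,0)}(x)=\lambda_n^{(0,\gamma)}(-x)$, and then takes suprema over the symmetric interval.
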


\begin{proof}
For $(0,\beta)$, combine Theorem~\ref{thm:endpoint-monotonicity}
with \eqref{eq:endpoint-maximality}.  For the reflected parameters,
the Jacobi identity
$$
P_j^{(\gamma,0)}(x)
=
(-1)^jP_j^{(0,\gamma)}(-x)
$$
first gives, after the change of variables $u=-t$ in the defining
norm, $h_j^{(\gamma,0)}=h_j^{(0,\gamma)}$.  Combining this equality
with \eqref{eq:jacobi-kernel} gives the kernel identity in the second
line below.  Finally, \eqref{eq:jacobi-weight} gives
$w_{\gamma,0}(t)=w_{0,\gamma}(-t)$, so the same change of variables
in \eqref{eq:lebesgue-definitions} gives the third line:
$$
\begin{aligned}
h_j^{(\gamma,0)}
&=
h_j^{(0,\gamma)},\\[7pt]
K_n^{(\gamma,0)}(x,t)
&=
K_n^{(0,\gamma)}(-x,-t),\\[7pt]
\lambda_n^{(\gamma,0)}(x)
&=
\lambda_n^{(0,\gamma)}(-x).
\end{aligned}
\numberthis{eq:reflection-chain}
$$
Taking suprema in the last identity in \eqref{eq:reflection-chain}
shows that
$\Lambda_n^{(\gamma,0)}=\Lambda_n^{(0,\gamma)}$.  The asserted
monotonicity now follows from the already proved $(0,\beta)$ case
with $\beta=\gamma$.
\end{proof}

For $\beta>0$, Theorem~\ref{thm:endpoint-monotonicity} remains valid,
but Proposition~\ref{prop:endpoint-maximality} is not available in
this orientation.  Accordingly, we assert monotonicity of the
endpoint Lebesgue function there, not of the global constant.

\subsection*{The Legendre case and the Qu--Wong desideratum}

At $\beta=0$, write $P_j=P_j^{(0,0)}$ and
$L_n=\Lambda_n^{(0,0)}$.  Equations
\eqref{eq:legendre-contiguous} and \eqref{eq:Q-derivative} reduce to
$$
Q_m^{(0)}
=
P_m^{(0,-1)}
=
\frac{P_m+P_{m-1}}2,
\quad
\bigl(Q_m^{(0)}\bigr)'
=
\frac m2P_{m-1}^{(1,0)}.
\numberthis{eq:legendre-radau}
$$
In view of \eqref{eq:legendre-radau},
Corollary~\ref{cor:extrema} is precisely the theorem of Wong
and Zhang on corresponding relative extrema \cite{WongZhang}, now
obtained from the continuous equal-phase comparison.

Equations \eqref{eq:ell-variation}, \eqref{eq:ell-total-variation},
and \eqref{eq:endpoint-maximality} give
$$
L_n
=
\int_{-1}^1
\left|\bigl(Q_{n+1}^{(0)}\bigr)'(x)\right|dx
=
1+2\sum_{k=1}^n
\left|
Q_{n+1}^{(0)}\bigl(\xi_{k,n+1}^{(0)}\bigr)
\right|.
\numberthis{eq:legendre-representation}
$$
Theorem~\ref{thm:endpoint-monotonicity} therefore gives the following
short consequence.

\begin{corollary}[Qu--Wong]
The Legendre Lebesgue constants satisfy
$$
L_{n+1}>L_n,
\quad n\geq0.
$$
\end{corollary}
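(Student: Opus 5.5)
The corollary is the case $\beta=0$ of what has already been proved, so I will obtain it by specialising. The first step is to identify $L_n=\Lambda_n^{(0,0)}$ with the endpoint quantity $\ell_n^{(0)}$. The value $\beta=0$ lies in the range $-1/2\leq\beta\leq0$ of Proposition~\ref{prop:endpoint-maximality}, so \eqref{eq:endpoint-maximality} gives $L_n=\lambda_n^{(0,0)}(1)=\ell_n^{(0)}$ for every $n\geq0$. This is the representation \eqref{eq:legendre-representation}. Here the global supremum is attained at the endpoint $x=1$, because Gasper's product formula at $\beta=0$ is the classical Legendre (Laplace) product formula with a positive kernel.

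The second step applies Theorem~\ref{thm:endpoint-monotonicity} with $\beta=0$, which is admissible since $0\geq-1/3$. It gives $\ell_{n+1}^{(0)}>\ell_n^{(0)}$ for all $n\geq0$, and hence $L_{n+1}>L_n$. For $n=0$ the argument reduces to $\ell_1^{(0)}=1+2|Q_2^{(0)}(\xi_{1,2}^{(0)})|>1=\ell_0^{(0)}$, and this case is already covered by the theorem.

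Behind these two steps the chain of dependencies is as follows. The total-variation formula, Lemma~\ref{lem:total-variation}, writes $\ell_n^{(0)}$ as $1$ plus twice the sum of the moduli of the relative extrema of $Q_{n+1}^{(0)}=(P_{n+1}+P_n)/2$. Corollary~\ref{cor:extrema} shows that each of these moduli increases strictly with the degree. The step from $n$ to $n+1$ also contributes one new positive extremum. The real content, and the main obstacle, lies upstream in the global ordering $q_{m+1}<q_m$ of Lemma~\ref{lem:q-comparison}, which has already been established for $\beta\geq-1/3$. For the corollary itself, the only point needing care is that the endpoint-maximality range and the comparison range overlap at $\beta=0$, which they do.
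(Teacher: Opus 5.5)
Your proposal is correct and follows the paper's own proof. At $\beta=0$, Proposition~\ref{prop:endpoint-maximality} identifies $L_n$ with $\ell_n^{(0)}$, which is the representation \eqref{eq:legendre-representation}, and Theorem~\ref{thm:endpoint-monotonicity} (admissible since $0\geq-1/3$) then gives $L_{n+1}>L_n$ for all $n\geq0$.
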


Representation \eqref{eq:legendre-representation} is of the kind
sought in the conclusion of \cite{QuWong}; the termwise inequality in
Corollary~\ref{cor:extrema} makes its
monotonicity immediate.  Thus no asymptotic expansion, explicit
error bound, or finite computation enters this proof.

\section[Why the direct method closes at alpha zero]
{Why the direct method closes at
\texorpdfstring{$\alpha=0$}{alpha zero}}
\label{sec:general}

We finally locate the obstruction to applying the same
two-dimensional Pr\"ufer comparison to arbitrary Jacobi parameters.
The discussion also clarifies what does and does not follow from the
present result.

For general $\alpha,\beta>-1$, the Jacobi norm and the endpoint form
of the Christoffel--Darboux formula give
$$
\begin{aligned}
K_n^{(\alpha,\beta)}(1,x)
&=
c_n^{(\alpha,\beta)}P_n^{(\alpha+1,\beta)}(x),\\[7pt]
c_n^{(\alpha,\beta)}
&:=
\frac{\Gamma(n+\alpha+\beta+2)}
{2^{\alpha+\beta+1}\Gamma(\alpha+1)\Gamma(n+\beta+1)}.
\end{aligned}
\numberthis{eq:general-endpoint-kernel}
$$
Define the cumulative endpoint kernel
$$
F_{n+1}^{(\alpha,\beta)}(x)
:=
c_n^{(\alpha,\beta)}
\int_{-1}^x
w_{\alpha,\beta}(t)P_n^{(\alpha+1,\beta)}(t)dt.
\numberthis{eq:F-definition}
$$
Then $F_{n+1}^{(\alpha,\beta)}$ is absolutely continuous and
$$
\begin{aligned}
F_{n+1}^{(\alpha,\beta)}(-1)&=0,\\[7pt]
F_{n+1}^{(\alpha,\beta)}(1)&=1,\\[7pt]
\bigl(F_{n+1}^{(\alpha,\beta)}\bigr)'(x)
&=
w_{\alpha,\beta}(x)K_n^{(\alpha,\beta)}(1,x),
\quad -1<x<1.
\end{aligned}
\numberthis{eq:F-data}
$$
By \eqref{eq:F-definition} and \eqref{eq:general-endpoint-kernel},
the middle identity in \eqref{eq:F-data} follows from
$$
F_{n+1}^{(\alpha,\beta)}(1)
=
\int_{-1}^1K_n^{(\alpha,\beta)}(1,t)
w_{\alpha,\beta}(t)dt
=1,
$$
because the kernel in \eqref{eq:jacobi-kernel} reproduces the
constant polynomial.  By \eqref{eq:jacobi-weight},
\eqref{eq:lebesgue-definitions}, and \eqref{eq:F-data}, the endpoint
value $\lambda_n^{(\alpha,\beta)}(1)$ of the Lebesgue function is
therefore always the total variation of this
cumulative kernel.  What is special about
$\alpha=0$ is that this primitive is precisely
$Q_{n+1}^{(\beta)}$, which has the explicit Jacobi--Radau form and
satisfies a closed homogeneous second-order equation, as shown in
Section~\ref{sec:radau}.

Here is the obstruction in operator form.  Write
$$
F:=F_{n+1}^{(\alpha,\beta)},
\quad
v:=F',
\quad
\sigma:=\alpha+\beta,
\quad
m:=n+1.
\numberthis{eq:general-notation}
$$
Put
$$
p(x):=P_n^{(\alpha+1,\beta)}(x),
\quad
h(x):=\frac{w_{\alpha,\beta}'(x)}{w_{\alpha,\beta}(x)}
=-\frac{\alpha}{1-x}+\frac{\beta}{1+x},
$$
and
$$
B_p(x):=\beta-\alpha-1-(\sigma+3)x.
$$
The Jacobi equation for $p$ is
$$
(1-x^2)p''+B_pp'+n(n+\sigma+2)p=0.
\numberthis{eq:general-jacobi-equation}
$$
By \eqref{eq:F-data} and \eqref{eq:general-endpoint-kernel},
$v=c_n^{(\alpha,\beta)}w_{\alpha,\beta}p$; direct differentiation
gives
$$
\begin{aligned}
p
&=
\frac{v}{c_n^{(\alpha,\beta)}w_{\alpha,\beta}},\\[7pt]
p'
&=
\frac{v'-hv}{c_n^{(\alpha,\beta)}w_{\alpha,\beta}},\\[7pt]
p''
&=
\frac{v''-2hv'+(h^2-h')v}
{c_n^{(\alpha,\beta)}w_{\alpha,\beta}}.
\end{aligned}
\numberthis{eq:general-substitution}
$$
Substitution of \eqref{eq:general-substitution} into
\eqref{eq:general-jacobi-equation} yields
$$
\begin{aligned}
(1-x^2)v''
&+\bigl[B_p-2(1-x^2)h\bigr]v'\\[7pt]
&+\bigl[
(1-x^2)(h^2-h')-B_ph+n(n+\sigma+2)
\bigr]v
=0.
\end{aligned}
\numberthis{eq:v-intermediate}
$$
The two coefficients simplify as follows:
$$
\begin{aligned}
(1-x^2)h
&=
(\beta-\alpha)-\sigma x,\\[7pt]
B_p-2(1-x^2)h
&=
\alpha-\beta-1+(\sigma-3)x,\\[7pt]
(1-x^2)(h^2-h')-B_ph
&=
2\sigma-\frac{2\alpha}{1-x},\\[7pt]
n(n+\sigma+2)+2\sigma
&=
(n+2)(n+\sigma).
\end{aligned}
\numberthis{eq:v-coefficients}
$$
Combining \eqref{eq:v-intermediate} and
\eqref{eq:v-coefficients} gives
$$
\begin{aligned}
(1-x^2)v''
&+\bigl[\alpha-\beta-1+(\sigma-3)x\bigr]v'\\[7pt]
&+\left[
(n+2)(n+\sigma)-\frac{2\alpha}{1-x}
\right]v
=0,
\quad -1<x<1.
\end{aligned}
\numberthis{eq:v-equation}
$$
Introduce
$$
\begin{aligned}
\mathcal D_m F
&:=
(1-x^2)F''
+\bigl[\alpha-\beta-1+(\sigma-1)x\bigr]F'\\[7pt]
&\quad +m(m+\sigma)F.
\end{aligned}
\numberthis{eq:D-definition}
$$
If
$$
A(x):=\alpha-\beta-1+(\sigma-1)x,
$$
then differentiating \eqref{eq:D-definition}, using $F'=v$, gives
$$
\begin{aligned}
\bigl(\mathcal D_mF\bigr)'
&=
(1-x^2)v''+\bigl[A(x)-2x\bigr]v'\\[7pt]
&\quad
+\bigl[A'(x)+m(m+\sigma)\bigr]v.
\end{aligned}
\numberthis{eq:D-differentiated}
$$
The coefficients in \eqref{eq:D-differentiated} satisfy, by
\eqref{eq:general-notation},
$$
\begin{aligned}
A(x)-2x
&=
\alpha-\beta-1+(\sigma-3)x,\\[7pt]
A'(x)+m(m+\sigma)
&=
\sigma-1+(n+1)(n+\sigma+1)\\[7pt]
&=
(n+2)(n+\sigma).
\end{aligned}
$$
Substitution of \eqref{eq:v-equation} into
\eqref{eq:D-differentiated} therefore yields the exact identity
$$
\bigl(\mathcal D_m F\bigr)'
=
\frac{2\alpha}{1-x}F',
\quad -1<x<1.
\numberthis{eq:D-obstruction}
$$
For $\alpha=0$, the right-hand side vanishes.  In that case the
constant in \eqref{eq:general-endpoint-kernel} becomes
$(n+\beta+1)/2^{\beta+1}$.  Hence
\eqref{eq:Q-definition}, \eqref{eq:weighted-differentiation}, and the
first identity in \eqref{eq:F-data} identify $F$ with
$Q_m^{(\beta)}$.
Equation \eqref{eq:Q-equation} then gives
$\mathcal D_mF=0$.  For $\alpha\ne0$, the singular term on the
right-hand side of \eqref{eq:D-obstruction} is unavoidable in this
reduction.

More explicitly, suppose one attempts to obtain \eqref{eq:v-equation}
by differentiating an equation
$$
(1-x^2)F''+B(x)F'+\lambda F=0
\numberthis{eq:hypothetical-F-equation}
$$
with constant spectral parameter $\lambda$.  Differentiating
\eqref{eq:hypothetical-F-equation} gives
$$
(1-x^2)v''+\bigl[B(x)-2x\bigr]v'
+\bigl[B'(x)+\lambda\bigr]v=0.
\numberthis{eq:hypothetical-v-equation}
$$
Comparison of the coefficient of $v'$ in
\eqref{eq:hypothetical-v-equation} with that in
\eqref{eq:v-equation} forces
$$
B(x)
=
\alpha-\beta-1+(\sigma-1)x.
$$
The coefficient $B'+\lambda$ in
\eqref{eq:hypothetical-v-equation} would then be constant, whereas
the coefficient of $v$ in \eqref{eq:v-equation} contains
$-2\alpha/(1-x)$.  The two equations can therefore agree only when
$\alpha=0$; in that case equality forces
$\lambda=m(m+\sigma)$.  This is a structural failure of the present
constant-frequency, two-dimensional Pr\"ufer mechanism; it is not a
claim that all discrete or global monotonicity phenomena must fail
when $\alpha\ne0$.

The same point is visible in the weighted Jacobi identity
$$
\frac{d}{dx}
\left\{
(1+x)^{\beta+1}P_n^{(\alpha,\beta+1)}(x)
\right\}
=
(n+\beta+1)(1+x)^\beta
P_n^{(\alpha+1,\beta)}(x),
\quad -1<x<1.
\numberthis{eq:general-weighted-differentiation}
$$
For $\alpha=0$, the right-hand side of
\eqref{eq:general-weighted-differentiation}, after multiplication by
$2^{-\beta-1}$, is the weighted endpoint kernel in
\eqref{eq:general-endpoint-kernel} and yields
$Q_{n+1}^{(\beta)}$ by \eqref{eq:Q-definition}.  When
$\alpha\ne0$, the Jacobi weight in \eqref{eq:jacobi-weight} contains
the additional factor
$(1-x)^\alpha$; placing it inside the derivative produces exactly the
additional term $2\alpha F'/(1-x)$ in
\eqref{eq:D-obstruction}.

\section{Concluding remarks}

The proof yields three conclusions.  First, throughout
$\beta\geq-1/3$, consecutive weighted Jacobi--Radau functions are
strictly ordered at every common phase.  Second, restriction to
$s=k\pi$ orders the corresponding relative extrema term by term.
Third, total variation shows that the endpoint sequence
$(\lambda_n^{(0,\beta)}(1))_{n\geq0}$ is strictly increasing.  The
positive product formula yields the same conclusion for the global
sequences
$(\Lambda_n^{(0,\beta)})_{n\geq0}$ for
$-1/3\leq\beta\leq0$ and, by reflection, to
$(\Lambda_n^{(\beta,0)})_{n\geq0}$ on the same interval.

At $\beta=0$, this chain supplies a direct proof of the Wong--Zhang
extremal theorem and then of the Qu--Wong theorem on Szeg\H{o}'s
conjecture.  The equal-phase Pr\"ufer mechanism used here is based on
ideas developed in an earlier preprint of the first author
\cite{Castillo2026}.  In that
precise sense, the present argument carries out the
alternative-expression programme proposed by Qu and Wong: the
finite total-variation representation exposes the relevant
quantities, and the equal-phase comparison proves each required
inequality before they are summed.

\section*{Acknowledgements}
The first author was supported by the Centre for Mathematics of the
University of Coimbra (CMUC), funded by the Portuguese Foundation for
Science and Technology (FCT), through the projects
\href{https://doi.org/10.54499/UID/00324/2025}{UID/00324/2025} and
UID/PRR/00324/2025, and by FCT grant
\href{https://doi.org/10.54499/2022.00143.CEECIND/CP1714/CT0002}
{2022.00143.CEECIND}.

\end{document}